\documentclass[a4paper,12pt,draft]{amsart}
\usepackage[margin=30mm]{geometry}
\usepackage{amssymb, amsmath, amsthm, amscd}

\usepackage[T1]{fontenc}
\usepackage{eucal,mathrsfs,dsfont}
\usepackage{color}


\renewcommand{\leq}{\leqslant}
\renewcommand{\geq}{\geqslant}
\renewcommand{\le}{\leqslant}
\renewcommand{\ge}{\geqslant}

\definecolor{mno}{rgb}{0.5,0.1,0.5}

\newcommand{\R}{\mathds R}

\newcommand{\Pp}{\mathds P}
\newcommand{\Ee}{\mathds E}

\newcommand{\I}{\mathds 1}

\newcommand{\F}{\mathscr{F}}

\newtheorem{theorem}{Theorem}[section]
\newtheorem{lemma}[theorem]{Lemma}
\newtheorem{proposition}[theorem]{Proposition}
\newtheorem{corollary}[theorem]{Corollary}

\theoremstyle{definition}

\newtheorem{example}[theorem]{Example}
\newtheorem{remark}[theorem]{Remark}

\makeatletter 

\@addtoreset{equation}{section}
\makeatother 

\begin{document}
\allowdisplaybreaks
\title{Littlewood--Paley--Stein Estimates for Non-local Dirichlet Forms}

\author{Huaiqian Li\qquad\quad Jian Wang}
\thanks{\emph{H.\ Li:}
Center for Applied Mathematics, Tianjin University, Tianjin 300072, P. R. China \texttt{huaiqianlee@gmail.com}}
 \thanks{\emph{J.\ Wang:}
 College of Mathematics and Informatics \& Fujian Key Laboratory of Mathematical Analysis and Applications (FJKLMAA), Fujian Normal University, 350007 Fuzhou, P. R. China. \texttt{jianwang@fjnu.edu.cn}}


\maketitle
\begin{abstract} We obtain the boundedness in $L^p$ spaces for all $1<p<\infty$ of the so-called vertical Littlewood--Paley functions for
non-local Dirichlet forms in the metric measure space under some mild assumptions. For $1<p\le 2$, the pseudo-gradient is
introduced to overcome the difficulty that chain rules are not
available for non-local operators, and then the Mosco convergence is used to pave the way from the finite jumping kernel case to the general case, while for $2\le p<\infty$, the Burkholder--Davis--Gundy inequality is effectively applied. The former
method is analytic and the latter one is probabilistic. The results extend those ones for pure jump symmetric L\'evy processes in Euclidean spaces.

\medskip

\noindent\textbf{Keywords:} Littlewood--Paley--Stein estimate; non-local Dirichlet form; pseudo-gradient;
Mosco convergence; Burkholder--Davis--Gundy inequality.
\medskip

\noindent \textbf{MSC 2010:} 60G51; 60G52; 60J25; 60J75.
\end{abstract}
\allowdisplaybreaks

\allowdisplaybreaks

\section{Introduction}\label{section1}
Let $(M,d)$ be a locally compact and separable metric space,  and
$\mu$ be a positive Radon measure on $M$ with full support. We will refer to such triple $(M,d,\mu)$ as a metric measure space. As usual, the real $L^p$ space is denoted by $L^p(M,\mu)$ with the norm
$$\|f\|_p:= \left(\int_M |f(x)|^p\, \mu(d x)\right)^{1/p},\quad1\leq p <\infty,$$
and
$$\|f\|_\infty := \textup{ess}\sup_{x\in M} |f(x)|,$$
where $\textup{ess}\sup$ is the essential supremum. The inner product of functions $f,g\in L^2(M,\mu)$ is denoted by $\langle f, g\rangle$.

Consider a Dirichlet form $(D,\mathscr{F})$ in $L^2(M,\mu)$, which is a closed, symmetric,
non-negative
definite, bilinear form $D: \mathscr{F}\times \mathscr{F}\rightarrow\R$ defined on a dense subspace $\mathscr{F}$ of $L^2(M,\mu)$, satisfying in addition the Markov property. The closedness means that $\mathscr{F}$ is a Hilbert space with respect to the
$D_1^{1/2}$-inner product defined by
$$D_1(f,g)=D(f,g)+\langle f, g\rangle.$$
The Markov property means that if $f\in\mathscr{F}$ then the function $\hat{f}:=\max\{0,\min\{1,f\}\}$ belongs to $\mathscr{F}$ and $D(\hat{f})\leq D(f)$. Here and in the sequel, we write $D(f)$ instead of $D(f,f)$ for short.

Let $L$ be the non-negative definite
$L^2$-generator
of the Dirichlet form $(D,\mathscr{F})$, which is a self-adjoint operator on $L^2(M,\mu)$ with domain $\mathscr{D}(L)$ such that
$$D(f,g)=\langle Lf,g\rangle,$$
for all $f\in\mathscr{D}(L)$ and $g\in\mathscr{F}$. The generator $L$ give rises to the semigroup $(P_t)_{t\geq0}$ with $P_t=e^{-tL}$ for all $t\geq0$ in the sense of functional calculus. It turns out that $(P_t)_{t\geq0}$ is a strongly continuous, contractive, symmetric semigroup in $L^2(M,\mu)$, and satisfies the Markov property which means that $0\leq P_tf\leq1$ for every $t>0$ provided $0\leq f\leq1$.

Let $C_c(M)$ be the space of all continuous functions on $M$ with compact support. Recall that the Dirichlet form  $(D,\mathscr{F})$ is
called regular if $\mathscr{F}\cap C_c(M)$ is dense both in $\mathscr{F}$ (with respect to the
$D_1^{1/2}$-norm) and in $C_c(M)$ (with respect to the supremum norm). It follows that if $(D,\mathscr{F})$ is regular, then every
function $f\in\mathscr{F}$ admits a quasi-continuous version $\tilde{f}$ (see e.g. \cite[Theorem 2.1.3]{FOT}). Throughout this
paper, we abuse the notation and
represent
$f\in \F$ by its quasi-continuous version without writing $\tilde f$.

In order to introduce the so-called vertical Littlewood--Paley square function, the ``module of gradient'' is necessary. The suitable candidate in this general setting should be the \emph{carr\'{e} du champ} operator. It is a non-negative, symmetric and continuous bilinear form $\Gamma: \mathscr{F}\times\mathscr{F}\rightarrow L^1(M,\mu)$ such that
$$D(f,g)=\int_M \Gamma(f,g)\,d\mu\quad\mbox{for every } f,g\in \mathscr{F},$$
which is uniquely characterized in the algebra $L^\infty(X,\mu)\cap \mathscr{F}$ by
$$\int_M\Gamma(f,g)h\,d\mu=D(f,gh)+D(g,fh)-D(fg,h),$$
for every $f,g,h\in L^\infty(X,\mu)\cap \mathscr{F}$. See \cite{BH1991} for more details. In the sequel, we use the notation $\Gamma(f):=\Gamma(f,f)$ for convenience.

\ \

In this paper, we are concerned with non-local Dirichlet forms. Let $(D,\mathscr{F})$ be a regular Dirichlet form of pure jump type in $L^2(M,\mu)$ defined as
\begin{equation}\label{nondi}
D(f,g)=\frac{1}{2}\iint_{M\times M \backslash {\rm diag}} (f(x)-f(y))(g(x)-g(y)) \,J(x,dy)\,\mu(dx),\quad f,g\in\mathscr{F},\end{equation}
where ${\rm diag}$ denotes the diagonal set $\{(x,x):x \in M\}$ and  $J(x,dy)$ is a non-negative kernel satisfying the symmetry property
 $$J(x,dy)\,\mu(dx)=J(y,dx)\,\mu(dy).$$
$J(x,dy)$ is called jumping kernel associated with the Dirichlet form $(D,\F)$ in the literature.
Then the\emph{ carr\'e du champ} operator $\Gamma$ is defined as follows
$$\Gamma(f,g)(x)=\frac{1}{2}\int_M (f(x)-f(y))(g(x)-g(y))\,J(x,dy),\quad f,g\in \mathscr{F} \text{ and } x\in M.$$
Clearly,
$$D(f)=\displaystyle\int_M \Gamma(f)(x)\,\mu(dx)\quad\mbox{for every }f\in\mathscr{F}.$$
This motivates us to define the gradient (more precisely, the module of  gradient) of a function $f\in \mathscr{F}$ by
\begin{equation}\label{g-1}
|\nabla f|(x)= \sqrt{ \Gamma(f)}(x)=
\left(\frac{1}{2}\int_M (f(x)-f(y))^2\,J(x,dy)\right)^{1/2},\quad x\in M.\end{equation}
Note that, due to the symmetry of $J(x,dy)\,\mu(dx)$, for every $f\in\mathscr{F}$,
$$D(f)= \iint_{\{(x,y)\in M\times M: f(x)\ge f(y)\}} (f(x)-f(y))^2 \,J(x,dy)\,\mu(dx).$$
Then, we can also well
define the following (module of) modified gradient
for every $f\in\mathscr{F}$,
\begin{equation}\label{g-2}
|\widetilde\nabla f|_*(x):=
\left(\int_{\{y\in M:\,f(x)\ge f(y)\}} (f(x)-f(y))^2\,J(x,dy)\right)^{1/2},\quad x\in M.\end{equation}
From \eqref{g-1} and \eqref{g-2}, it is easy to know that, for every $f\in\mathscr{F}$,
$$0\le |\widetilde\nabla f|_*\le \sqrt{2}|\nabla f|\quad\mbox{and}\quad\||\nabla f|\|_2^2= \||\widetilde\nabla f|_*\|_2^2=D(f,f).$$

Actually, motivated by \cite{BBL}, we need a further modification of the gradient (and this is a crucial point; see some remarks at the end of Section \ref{section2}). For any $f\in\mathscr{F}$, we define
\begin{equation}\label{g-21}
|\widetilde\nabla f|(x)=
\left(\int_{\{y\in M:\,|f|(x)\ge |f|(y)\}} (f(x)-f(y))^2\,J(x,dy)\right)^{1/2},\quad x\in M.\end{equation}
It is easy to see that
$|\widetilde\nabla f|=|\widetilde\nabla f|_*$ for any $0\le f\in \mathscr{F}$; however, for general $f\in \mathscr{F}$,
they are not comparable to each other. We also note that, similar to the standard module of gradient,
$|\widetilde\nabla f|= |\widetilde\nabla (-f)|$ for any $f\in \mathscr{F}$;
however, such property is not satisfied  for $|\widetilde\nabla \cdot|_*$. This in some sense indicates that the definition of the modified gradient $|\widetilde\nabla \cdot|$ above is more reasonable than that of $|\widetilde\nabla \cdot|_*$.

For every $f\in L^1(M,\mu)\cap L^\infty(M,\mu)$, we
now
define the vertical Littlewood--Paley $\mathscr{H}$-functions $\mathscr{H}_\nabla(f)$ and $\mathscr{H}_{\widetilde\nabla}(f)$ corresponding to the non-local Dirichlet form $(D,\mathscr{F})$ in \eqref{nondi} as
$$\mathscr{H}_\nabla(f)(x)=\left(\int_0^\infty |\nabla P_t f|^2(x)\,dt\right)^{1/2},$$
and
\begin{equation}\label{eeefff}\mathscr{H}_{\widetilde\nabla}(f)(x)=\left(\int_0^\infty |\widetilde\nabla P_tf|^2(x)\,dt\right)^{1/2},\end{equation}
for every $x\in M$.

The purpose of this paper is to establish Littlewood--Paley--Stein estimates in $L^p(M,\mu)$ for non-local Dirichlet form $(D,\mathscr{F})$
and for all $1<p<\infty$. The main result
is the following theorem (see Theorems \ref{th1} and \ref{thp} below for precise expressions).
\begin{theorem}\label{main}
Let $(M,d,\mu)$ be a metric measure space. Consider the non-local Dirichlet form $(D,\mathscr{F})$ defined in \eqref{nondi}. Under some
mild assumptions, for $p\in (1,2]$ the vertical Littlewood--Paley operator $\mathscr{H}_{\widetilde\nabla}$ is bounded in $L^p(M,\mu)$; for $p\in [2,\infty)$ the vertical Littlewood--Paley operator $\mathscr{H}_\nabla$ is bounded in $L^p(M,\mu)$.
\end{theorem}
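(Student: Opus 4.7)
The plan is to treat the two ranges of $p$ by entirely different techniques, as the abstract indicates. For $2\le p<\infty$ I would follow a probabilistic route based on the Burkholder--Davis--Gundy inequality, while for $1<p\le 2$ the argument is analytic and exploits the one-sided structure of $|\widetilde\nabla\cdot|$ in \eqref{g-21}.

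\textbf{The case $2\le p<\infty$.} Under the mild regularity hypotheses ensuring that $(D,\F)$ generates a $\mu$-symmetric Hunt process $(X_t)_{t\ge 0}$, I would fix $T>0$ and $f\in L^1\cap L^\infty$, and work with the space-time martingale
$$M_t := P_{T-t}f(X_t),\qquad 0\le t\le T,$$
obtained from Dynkin's formula applied to the space-time harmonic function $(t,x)\mapsto P_{T-t}f(x)$. Since $(D,\F)$ is of pure jump type, $M$ is purely discontinuous with predictable quadratic variation
$$\langle M\rangle_T = 2\int_0^T \Gamma(P_{T-s}f)(X_s)\,ds.$$
Applying BDG under $\Pp^\mu$ together with Doob's maximal inequality and the $L^p$-contractivity of $P_t$ yields $\Ee^\mu[\langle M\rangle_T^{p/2}]\le C_p\|f\|_p^p$. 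A time reversal (legitimate by $\mu$-symmetry) plus Jensen's inequality, which is applicable because $p/2\ge 1$, brings the functional inside the conditional expectation and produces a pointwise $L^p(\mu)$ estimate on $\int_0^T\Gamma(P_sf)(x)\,ds$. Letting $T\to\infty$ by monotone convergence delivers the bound on $\mathscr{H}_\nabla f$.

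\textbf{The case $1<p\le 2$.} The absence of chain rules is the central obstruction; to sidestep it I would use the elementary one-sided inequality
$$(|a|^{p-2}a-|b|^{p-2}b)(a-b) \ge c_p\,(|a|\vee|b|)^{p-2}(a-b)^2,\qquad a,b\in\R,$$
which is tailored precisely to the asymmetric region $\{|f|(x)\ge |f|(y)\}$ that defines $|\widetilde\nabla f|$ in \eqref{g-21}. Assuming first that $J$ is a \emph{finite} kernel (so $L$ is bounded and $t\mapsto P_tf$ is classically differentiable), I would compute
$$-\frac{d}{dt}\|P_tf\|_p^p = p\,D(|P_tf|^{p-2}P_tf,P_tf),$$
expand via \eqref{nondi}, symmetrise in $(x,y)$, and apply the above pointwise inequality on $\{|P_tf|(x)\ge |P_tf|(y)\}$ to arrive at the ``Stein-type identity''
$$\int_0^\infty\int_M |P_tf|^{p-2}\,|\widetilde\nabla P_tf|^2\,d\mu\,dt \le c_p^{-1}\|f\|_p^p.$$
The desired $L^p$-bound on $\mathscr{H}_{\widetilde\nabla}f$ is then extracted by the classical Stein manoeuvre: decomposing $|\widetilde\nabla P_tf|^2 = |P_tf|^{2-p}\cdot|P_tf|^{p-2}|\widetilde\nabla P_tf|^2$ and using H\"older's inequality reduces matters to controlling the factor $(\sup_{t\ge 0}|P_tf|)^{2-p}$, which is absorbed by the Hopf--Dunford--Schwartz maximal inequality. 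Finally, the finiteness assumption on $J$ is removed by approximating $(D,\F)$ by truncated forms with kernels $J^{(n)}:=J\,\I_{\{d(x,y)>1/n\}}$; Mosco convergence of the forms gives strong $L^2$-convergence of the semigroups, and Fatou's lemma transfers the a priori estimate to the limit.

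\textbf{Main obstacles.} I expect the technical heart to lie in two places. On the probabilistic side, the descent from the path-space bound on $\langle M\rangle_T^{p/2}$ to a genuinely pointwise $L^p(\mu)$ bound on $\mathscr{H}_\nabla f$ requires a careful combination of time reversal and a sub-Markov/Cauchy--Schwarz step, because Jensen alone only controls $\int P_s\Gamma(P_sf)\,ds$ rather than $\int\Gamma(P_sf)\,ds$ pointwise. On the analytic side, the delicate book-keeping is that the asymmetric cone $\{|f|(x)\ge|f|(y)\}$ and the symmetric bilinear form $D$ interact only through symmetrisation, so one must check that the pseudo-gradient bound survives this symmetrisation, and that Mosco convergence is compatible with the \emph{nonlinear} functional $f\mapsto |\widetilde\nabla f|$.
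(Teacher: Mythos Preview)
Your plan is sound and, for $p\ge 2$, essentially coincides with the paper's: both build the space--time martingale $P_{T-t}f(X_t)$, invoke BDG, and then must descend from the path--space bound on $\langle M\rangle_T^{p/2}$ to a pointwise bound on $\mathscr{H}_\nabla f$. You have correctly identified that Jensen alone lands on $\int_0^T P_s\Gamma(P_sf)\,ds$; the paper closes this gap exactly by the Cauchy--Schwarz/sub-Markov step you anticipate, showing (Lemma~3.3) that $P_t\Gamma(P_tf)(x)\ge \Gamma(P_{2t}f)(x)$ after a suitable representation of the jump kernel. The paper implements the conditioning via an explicit transition density and an auxiliary disintegration $J(x,\cdot)=\nu\circ k(x,\cdot)^{-1}$ rather than via time reversal under $\Pp^\mu$, but these are dual viewpoints on the same computation.

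For $1<p\le 2$ you take a genuinely different route from the paper. The paper introduces the \emph{pseudo-gradient} $\Gamma_p(f)=pfLf-|f|^{2-p}L(|f|^p)$, establishes the \emph{pointwise} inequality $|\widetilde\nabla f|^2(x)\le \tfrac{2}{p(p-1)}\Gamma_p(f)(x)$ for signed $f$ (their Proposition~2.5, the most delicate step), and then runs the Stein argument on $\Gamma_p(P_tf)=-|P_tf|^{2-p}(\partial_t+L)|P_tf|^p$. Your approach bypasses $\Gamma_p$ entirely: differentiating $\|P_tf\|_p^p$ and applying the scalar inequality $(|a|^{p-2}a-|b|^{p-2}b)(a-b)\ge c_p(|a|\vee|b|)^{p-2}(a-b)^2$ under the double integral, followed by symmetrisation in $(x,y)$, already yields the \emph{integrated} estimate $\int_0^\infty\!\int_M|P_tf|^{p-2}|\widetilde\nabla P_tf|^2\,d\mu\,dt\le C\|f\|_p^p$, which is precisely the $K$-functional the Stein man\oe uvre needs. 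Both routes then conclude with the Hopf--Dunford--Schwartz maximal inequality and Mosco convergence of the truncated forms.

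What each approach buys: the paper's pseudo-gradient route produces a pointwise object dominating $|\widetilde\nabla P_tf|^2$ and, through the identity $\Gamma_p(u_t)=-|u_t|^{2-p}(\partial_t+L)|u_t|^p$, also delivers the gradient decay $\||\widetilde\nabla P_tf|\|_p\lesssim t^{-1/2}\|f\|_p$ as a by-product (their Proposition~2.6(i)), which your integrated argument does not see directly. Conversely, your route is shorter for the square-function bound itself, avoids the case analysis behind the pointwise comparison for signed $f$, and sidesteps the verification that $\int_M L|P_tf|^p\,d\mu\ge 0$ which the paper has to argue separately. One caveat: in the finite-kernel reduction you write ``$L$ is bounded''; the paper only assumes $\int_M J(x,dy)<\infty$ for each $x$, not a uniform bound, so you should phrase this as ``$Lf$ is defined pointwise for bounded $f$'' rather than asserting operator boundedness.
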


The prototype of Littlewood--Paley--Stein estimates is the $L^p$ boundedness of the Littlewood--Paley $g$-function in the Euclidean space for all $1<p<\infty$;
see \cite[Chapter IV, Theorem 1]{St1970}. There are a lot of extensions on this result in various directions, and we only recall some of them.
We are interested in the vertical (i.e., derivative with respect to the spatial variable) Littlewood--Paley--Stein estimates for heat or Poisson semigroups. Let $M$ be a complete and connected (smooth) Riemannian manifold with Riemannian volume measure $dx$, the non-negative Laplace--Beltrami operator $\Delta$, the corresponding heat semigroup $(e^{-t\Delta})_{t\ge0}$ and Poisson semigroup $(e^{-t\sqrt{\Delta}})_{t\ge0}$, as well as the gradient operator $\nabla$. For every $f\in C_c^\infty(M)$, the vertical Littlewood--Paley $\mathscr{H}$- and $\mathscr{G}$-functions are given by
\begin{equation}\label{cla}
\mathscr{H}(f)(x)=\left(\int_0^\infty |\nabla e^{-t\Delta} f|^2(x)\,dt\right)^{1/2},
\end{equation}
 and \begin{equation}\label{cla-G}
\mathscr{G}(f)(x)=\left(\int_0^\infty t|\nabla e^{-t\sqrt{\Delta}} f|^2(x)\,dt\right)^{1/2},
\end{equation}
for every $x\in M$, where $|\cdot|$ is the length induced by the Riemannian distance in the tangent space. The operator $\mathscr{H}$
is called bounded in $L^p(M,dx)$ (or the Littlewood--Paley--Stein estimate holds for $\mathscr{H}$) for any $p\in (1,\infty)$, if there exists a constant $c_p>0$ such that
$$\|\mathscr{H}(f)\|_p\le c_p\|f\|_p,\quad f\in C_c^\infty(M).$$
(The same for $\mathscr{G}$.) On the aspect of analytic approaches, Stein \cite[Chapter II]{Stein} proved the $L^p$ boundedness of $\mathscr{G}$ for all $p\in (1,\infty)$ on compact Lie groups. Lohou\'{e} \cite{Lou1987} investigated the $L^p$ boundedness of the Littlewood--Paley $\mathscr{H}_a$- and $\mathscr{G}_a$-functions, defined as                                                                                                 $$\mathscr{H}_a(f)(x)=\Big(\int_0^\infty e^{at}|\nabla e^{-t\Delta}f(x)|^2\, dt\Big)^{1/2}$$
and $$\mathscr{G}_a(f)(x)=\Big(\int_0^\infty te^{at}|\nabla e^{-t\sqrt{\Delta}}f(x)|^2\, dt\Big)^{1/2}$$
in the Cartan--Hadamard manifold, where $a$ is a real number to be determined. In fact, no additional assumptions on $M$ are needed for the boundedness of $\mathscr{H}$ and $\mathscr{G}$ in $L^p(M,dx)$ for $1<p\leq2$ (see e.g. \cite{CDD}), while, for the case when $2<p<\infty$, much stronger assumptions are need (see e.g.\ \cite[Proposition 3.1]{CD2003}). On the aspect of probabilistic approaches, we should mention that Meyer \cite{Mey,Mey1981} studied the $L^p$ boundedness for all $1<p<\infty$ on the  Littlewood--Paley
$\mathscr{G}_*$-function, defined as
$$\mathscr{G}_*(f)(x)=\Big(\int_0^\infty te^{-2t\sqrt{\Delta}}|\nabla e^{-t\sqrt{\Delta}}f(x)|^2\, dt\Big)^{1/2}.$$
Bakry established a slightly different Littlewood--Paley--Stein estimate for diffusion processes under the condition that the Bakry--Emery $\Gamma_2$ is non-negative in \cite{Bakry1985}, and then proved it under the condition that $\Gamma_2$ is lower bounded on complete Riemannian manifolds in \cite{Bakry1987}. See also \cite{ShYo}, where strong assumptions are needed to guarantee a nice algebra and to run the $\Gamma$ calculus for diffusion processes. Li \cite{Li2006} established the Littlewood--Paley--Stein estimate for $\mathscr{G}$ on complete Riemannian manifolds, as well as the $L^p$ boundedness for $p\in(1,2]$ of Littlewood--Paley square functions for Poisson semigroups generated by the Hodge--Laplacian. We do not mention many studies on Wiener spaces here.

For non-local Dirichlet forms, to the authors'
knowledge, the study on the $L^p$ boundedness of the vertical Littlewood--Paley operator $\mathscr{H}$
is not too much. Dungey \cite{Nick} obtained the $L^p$ boundedness of the vertical Littlewood--Paley operator with $1<p\leq2$ for random walks on graphs and groups.  Ba\~{n}uelos, Bogdan and Luks \cite{BBL} studied Littlewood--Paley--Stein estimates for symmetric L\'evy processes in the Euclidean space recently (see \cite{BK} for more recent  extension on non-symmetric L\'evy processes). In the aforementioned papers on the L\'evy process case, the Euclidean structure is nice to apply the Hardy--Stein identity and it is used in a crucial way. However, the approach to prove our main result
(Theorem \ref{main} above), which is presented in the metric measure space setting, is different. Indeed, when $p\in (1,2]$, we prove the
boundedness of $\mathscr{H}_{\widetilde\nabla}$ by using the pseudo-gradient to overcome the difficulty that chain rules are not available
for non-local operators, and then by applying the Mosco convergence from finite jumping kernel case
to general case (see Theorem \ref{th1-23} below);  when $p\in [2,\infty)$, we verify the boundedness of $\mathscr{H}_\nabla$,
by following the idea of \cite{BBL} to express the square function as a conditional expectation of the quadratic variation of a suitable martingale and then applying the Burkholder--Davis--Gundy inequality (see Theorem \ref{thp} below). We would like to mention that, though $|\nabla f|$
seems more natural than $|\widetilde\nabla f|$, since $|\widetilde\nabla f|$ is of a certain asymmetry,  \cite[Example 2]{BBL} indicates that
in some settings $|\nabla f|$ may be too large to yield the boundedness of $\mathscr{H}_\nabla$ in $L^p(M,\mu)$ for $1<p< 2$.

To indicate clearly the contribution of our paper, we present two examples, for which Theorem \ref{main} (see also Theorems \ref{th1} and \ref{thp} below) is applicable.

\begin{example}\it Let $(D,\mathscr{F})$ be a regular Dirichlet form on $L^2(M,\mu)$ as follows
\begin{align*}D(f,f)=\frac{1}{2}\iint_{M\times M \backslash {\rm diag}}(f(y)-f(x))^2J(x,y)\,\mu(dx)\,\mu(dy),\quad f\in \F
\end{align*}
where $J(x,y)$ is a non-negative measurable symmetric function on $M\times M\backslash {\rm diag}$ such that
$$\int_{\{y\in M: d(x,y)>r\}} J(x,y)\,\mu(dy)<\infty,\quad x\in M, r>0.$$ Then, for any $p\in (1,2]$, the vertical Littlewood--Paley operator $\mathscr{H}_{\widetilde\nabla}$ is bounded in $L^p(M,\mu)$. \end{example}

The next example includes symmetric stable-like processes on $\R^d$ of variable orders.

\begin{example}\label{exm2}\it Let $s:\R^d\to [s_1,s_2]\subset (0,2)$ such that
$$|s(x)-s(y)|\le \frac{c}{\log(2/|x-y|)},\quad |x-y|\le 1$$ holds for some constant $c>0$.  Let $(D,\mathscr{F})$ be a regular Dirichlet form on $L^2(\R^d,dx)$ as follows
\begin{align*}D(f,f)=&\frac{1}{2}\iint(f(y)-f(x))^2j(x,y)\,dx\,dy,\\
\mathscr{F}=&\overline{ C_c^1(\R^d)}^{D^{1/2}_1},
\end{align*}
where $D_1(f,f)=D(f,f)+\|f\|_2^2$,  $j(x,y)$ is a non-negative symmetric measurable function on $\R^d\times \R^d$ such that
$$\sup_{x\in \R^d} \int_{\{|y-x|>1\}}j(x,y)\,dy<\infty,$$
and for some constants $c_1,c_2>0$
\begin{equation}\label{ed4}\frac{c_1}{|x-y|^{d+s(x)\wedge s(y)}}\le j(x,y)\le \frac{c_2}{|x-y|^{d+s(x)\vee s(y)}},\quad |x-y|\le 1.\end{equation} Then, for $p\in [2,\infty)$, the vertical Littlewood--Paley operator $\mathscr{H}_{\nabla}$ is bounded in $L^p(\R^d,dx)$. \end{example}

\ \

The next two sections are devoted to
proving the main result (Theorem \ref{main} above), which is treated separately according to $p\in (1,2]$ and $p\in [2,\infty)$.

\section{Littlewood--Paley--Stein estimates for $1<p\le 2$}\label{section2}
\subsection{Pseudo-gradient} In order to show the motivation, for the moment, let $M$ be a (smooth) Riemannian manifold, $\Delta$ be the Laplace--Beltrami operator, and $\nabla$ be the Riemannian gradient, and denote by $|\cdot|$ the length induced by the Riemannian distance in the tangent space.  A beautiful way to prove the $L^p$ boundedness of $\mathscr{H}$ and $\mathscr{G}$, defined by \eqref{cla-G} and \eqref{cla} respectively, is based on the following chain rule:
\begin{equation}\label{e:rie}
\Delta (f^p)=pf^{p-1} \Delta f-p(p-1)f^{p-2}|\nabla f|^2,
\end{equation}
which is valid for all $1<p<\infty$ and for all
 smooth functions $f$ on the Riemannian manifold $M$;
see \cite[Chapter IV, Lemma 1, p.\ 86]{Stein} or the proof of \cite[Lemma 2.1]{CDD} for example.

However, this so-called chain rule no longer holds for non-local
Dirichlet forms. For this,
following the idea of \cite{Nick},
we may make use of the following pseudo-gradient, which is defined by
\begin{equation}\label{e:ger}
\widetilde{\Gamma}_p(f)=pf(Lf)-f^{2-p}L(f^p),\end{equation}
for $p\in (1,\infty)$ and suitable non-negative
functions $f$,
where $L$ is the generator of the regular non-local Dirichlet form $(D,\mathscr{F})$ given in \eqref{nondi}. From \eqref{e:rie}, when $L$ is the Laplace--Beltrami operator $\Delta$ on the Riemannian manifold $M$, it is clear that the right hand side of \eqref{e:ger} is just $p(p-1)|\nabla f|^2$. Due to this, one can reasonably imagine that, in the general setting, $\widetilde{\Gamma}_p(f)$ should play the same role as $|\nabla f|^2$  in the Riemannian manifold setting. This is the reason why we call $\widetilde{\Gamma}_p(f)$ the  pseudo-gradient of $f$. For more details on the pseudo-gradient, refer to its origination \cite{Nick}.

For our purpose, we need to define the pseudo-gradient $\Gamma_p$ for suitable function $f$ (which is not necessarily non-negative)
as follows: for $p\in (1,2]$,
\begin{equation}\label{e:ger0}\Gamma_p(f)=pf(Lf)-|f|^{2-p}L(|f|^p).\end{equation} In particular, when $p=2$, $\Gamma_2(f)=2fLf-L(f^2)$. (In fact, for $p\in (2,\infty)$, we can also define $\Gamma_p(f)$ by the right hand side of \eqref{e:ger0} for suitable function $f$;
however, we will not use it in this work.) We emphasis that, to extend the definition of $\Gamma_p$ for signed function is one of the crucial points in our argument. This is a key difference between the discrete setting as in \cite{Nick} and the present setting for general  metric measure spaces.

Recall that $(M,d,\mu)$ is a metric measure space and $(D,\mathscr{F})$ is a non-local regular Dirichlet form of pure jump type defined in \eqref{nondi}. In the present setting, for a suitable function $f$ on $M$, $|\nabla f|$ and $|\widetilde\nabla f|$ are defined in \eqref{g-1} and \eqref{g-21}, respectively. In order to compare $\Gamma_p(f)$ with $|\nabla f|$ and $|\widetilde\nabla f|$, we need the closed expression of the generator $L$, which is difficult to seek in general (see e.g.\ \cite{SW2014}). However, if
\begin{equation}\label{e:bound} \int_{M\backslash\{x\} } J(x,dy)<\infty,\quad x\in M,\end{equation}
then it is easy to see that, for all $f\in \mathscr{D}(L)$,
 \begin{equation}\label{a12}Lf(x)=\int_M (f(x)-f(y))\,J(x,dy).\end{equation}
Note also that under \eqref{e:bound}, $Lf$ is pointwise $\mu$-a.e.\ well defined by \eqref{a12} for every $f\in L^\infty(M,\mu)$. We call that the jumping kernel $J(x,dy)$ is finite, if \eqref{e:bound} is satisfied.

\subsection{Boundedness of Littlewood--Paley functions for  $1<p\le  2$: the finite case}
Throughout this subsection, we always suppose that the jumping kernel $J(x,dy)$ is finite, i.e., \eqref{e:bound} holds.
The next lemma provides an explicit formula for $\Gamma_p(f)$ when $p\in (1,2]$ and $f\ge0$ (see \cite[Lemma 3.2]{Nick} for the case on graphs).
\begin{lemma}\label{Gamma_p}Under \eqref{e:bound}, for any $p\in (1,2]$ and $0\le f\in \mathscr{D}(L)\cap L^\infty(M,\mu)$,
it holds
$$\Gamma_p(f)(x)=p(p-1)\int_{\{y\in M:f(y)\neq f(x)\}}(f(x) -f(y))^2 \, I(f(x),f(y);p)\,J(x,dy),$$
for any $x\in M$, where
$$I(f(x),f(y);p)= \int_0^1  \frac{(1-u)f(x)^{2-p}}{((1-u)f(x)+uf(y))^{2-p}}\,du,$$
and $0^0:=1$.
\end{lemma}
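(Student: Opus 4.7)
The plan is to reduce the claim to an elementary pointwise identity for the integrand. Under \eqref{e:bound}, the generator $L$ acts pointwise by \eqref{a12} on bounded functions, and since $f\in L^\infty(M,\mu)$ is non-negative, $f^p\in L^\infty(M,\mu)$ as well, so both $Lf$ and $L(f^p)$ are given by absolutely convergent integrals against $J(x,dy)$. Writing out \eqref{e:ger0} with these pointwise expressions and combining the two integrands into one gives
\[
\Gamma_p(f)(x)=\int_M \Bigl[p\,f(x)\bigl(f(x)-f(y)\bigr)-f(x)^{2-p}\bigl(f(x)^p-f(y)^p\bigr)\Bigr]\,J(x,dy).
\]
So everything reduces to analysing the bracketed integrand.

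The key step is an integral-form Taylor identity. Setting $a=f(x)$ and $b=f(y)$, I would consider $g(t):=((1-t)a+tb)^p$ for $t\in[0,1]$ and apply Taylor's theorem with integral remainder at $t=0$,
\[
g(1)=g(0)+g'(0)+\int_0^1(1-t)g''(t)\,dt,
\]
which yields
\[
b^p-a^p = p\,a^{p-1}(b-a)+p(p-1)(b-a)^2\int_0^1 (1-t)\bigl((1-t)a+tb\bigr)^{p-2}\,dt.
\]
Multiplying by $-a^{2-p}$, rearranging, and recognising the resulting quotient $a^{2-p}/((1-t)a+tb)^{2-p}$ as the integrand of $I(a,b;p)$ gives
\[
p\,a(a-b)-a^{2-p}(a^p-b^p)=p(p-1)(a-b)^2\,I(a,b;p).
\]
Substituting back into the expression for $\Gamma_p(f)(x)$, and noting that the integrand vanishes on the set $\{y:f(y)=f(x)\}$, yields the asserted formula.

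The only genuine technical point — and what I would treat as the main obstacle — is the degenerate case $f(x)=0$. When $a=0$ and $p\in(1,2)$, the factor $a^{2-p}=0$ annihilates everything on the right-hand side, and one must check this is consistent with the left-hand side (it is, because $\Gamma_p(f)(x)=-0\cdot L(f^p)(x)=0$ under the convention $0^0=1$ that appears only in the borderline case $p=2$). When $p=2$ and $a=0$, the convention $0^0=1$ gives $I(0,b;2)=\int_0^1(1-u)\,du=1/2$, while a direct computation from $\Gamma_2(f)(x)=-L(f^2)(x)$ gives $\int f(y)^2\,J(x,dy)$, and the two match. In verifying the Taylor remainder one should also note that the integrability of $t\mapsto((1-t)a+tb)^{p-2}$ near possible zeros of the argument is guaranteed by $p>1$, so the integral remainder is finite and the whole derivation stays rigorous at the boundary.
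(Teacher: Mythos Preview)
Your proof is correct and follows essentially the same route as the paper: both reduce to the pointwise integrand via \eqref{a12}, and both establish the key identity $p\,a(a-b)-a^{2-p}(a^p-b^p)=p(p-1)(a-b)^2 I(a,b;p)$ through a second-order Taylor expansion with integral remainder (the paper expands $t\mapsto t^p$ and then substitutes $v=(1-u)s+ut$, whereas you parametrize the segment directly --- the resulting formula is identical). Your treatment of the degenerate endpoint $a=0$ and of the integrability of $((1-t)a+tb)^{p-2}$ for $p>1$ matches the paper's brief remark on the case $s=0$.
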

\begin{remark}\label{remark333} On the one hand, Lemma \ref{Gamma_p} holds trivially when $p=2$; indeed, it is well known that, for every $f\in  \mathscr{D}(L)\cap L^\infty(M,\mu)$ and every $x\in M$,
$$\Gamma_2(f)(x)=\big[2f(Lf)-L(f^2)\big](x)= 2 \Gamma(f)(x)=\int_M (f(x)-f(y))^2\,J(x,dy).$$
On the other hand, we note that for $p\in (1,2)$ and $0\le f\in \mathscr{D}(L)\cap L^\infty(M,\mu)$ such that $f(x)\neq f(y)$ for $x,y\in M$, $$\int_0^1  \frac{(1-u)f(x)^{2-p}}{((1-u)f(x)+uf(y))^{2-p}}\,du \le \int_0^1  \frac{1}{(1-u)^{1-p}}\,du=\frac{1}{p},$$
and hence,  $\Gamma_p(f)\le (p-1) \Gamma_2(f)<\infty$. \end{remark}
\begin{proof}[Proof of Lemma $\ref{Gamma_p}$]
By the remark above, we only need to prove the case when $p\in (1,2)$.
According to \eqref{a12},  for $0\le f\in \mathscr{D}(L)\cap L^\infty(M,\mu)$,  we have
$$\Gamma_p(f)(x)=\int_M\big(pf(x)(f(x)-f(y))-f(x)^{2-p}(f(x)^p-f(y)^p)\big)\,J(x,dy).$$
Note that, to further calculate the right hand side of the equality above, we only need to consider the case when $f(y)\neq f(x)$ inside the integral.

By the Taylor expansion of the function $t\mapsto t^p$ on $[0,\infty)$, we have
\begin{align*}t^p-s^p=&ps^{p-1}(t-s)+p(p-1)\int_s^t v^{p-2}(t-v)\,dv\\
=&ps^{p-1}(t-s)+p(p-1)(t-s)^2\int_0^1 \frac{1-u}{((1-u)s+ut)^{2-p}}\,du\end{align*} for any $s,t\ge0$ with $s\neq t$, where the second equality follows by a change of variable, i.e., $v=(1-u)s+ut$. When $s=0$, the condition $p>1$ ensures that the integral  exists.
If $f(y)\neq f(x)$, then, setting $s=f(x)$ and $t=f(y)$, we obtain that
\begin{align*}&pf(x)(f(x)-f(y))-f(x)^{2-p}(f(x)^p-f(y)^p)\\
&=f(x)^{2-p}[(f(y)^p-f(x)^p)-pf(x)^{p-1} (f(y)-f(x))]\\
&=p(p-1)(f(x)-f(y))^2\int_0^1\frac{(1-u)f(x)^{2-p}}{((1-u)f(x)+uf(y))^{2-p}}\,du\\
&=p(p-1)(f(x)-f(y))^2 I(f(x),f(y);p).\end{align*} This yields
\begin{equation*}
\begin{split}\Gamma_p(f)(x)=&p(p-1)\int_{\{y\in M:f(y)\neq f(x)\}} (f(x)-f(y))^2 I(f(x),f(y);p)\,J(x,dy).\end{split}\end{equation*}
We prove the desired assertion. \end{proof}

Now we can immediately compare $\Gamma_p(f)$ with $|\nabla f|^2$ and $|\widetilde{\nabla} f|^2$ for suitable non-negative $f$.
\begin{corollary}\label{comp} Under \eqref{e:bound}, for $p\in (1,2]$ and $0\le f\in \mathscr{D}(L)\cap L^\infty(M,\mu)$,
$$0\le \Gamma_p(f)(x)\le 2(p-1)|\nabla f|^2(x)$$
and
\begin{equation}\label{e:ffcc}|\widetilde \nabla f|^2(x)\le 2/(p(p-1)) \Gamma_p(f)(x),\end{equation}
for any $x\in M$, where $|\nabla f|$ and $|\widetilde\nabla f|$ are defined by \eqref{g-1} and \eqref{g-21}, respectively.
 \end{corollary}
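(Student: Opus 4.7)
The plan is to read everything off the explicit integral representation of $\Gamma_p(f)$ given by Lemma \ref{Gamma_p}, and to control the weight $I(f(x),f(y);p)$ from above and from below on appropriate subsets of $M$.

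\textbf{Upper bound and non-negativity of $\Gamma_p(f)$.} Since $f\ge 0$ and $(f(x)-f(y))^2\ge 0$, and since the integrand defining $I(f(x),f(y);p)$ is non-negative (the denominator is positive whenever $f(x)>0$ and $2-p\ge 0$, while the $0^0:=1$ convention handles the case $f(x)=0$), the formula of Lemma \ref{Gamma_p} makes $\Gamma_p(f)(x)\ge 0$ immediate. For the upper bound, I would invoke the estimate from Remark \ref{remark333}, namely $I(f(x),f(y);p)\le 1/p$, obtained by dropping the factor $f(x)^{2-p}/((1-u)f(x)+uf(y))^{2-p}$ to the trivial bound $(1-u)^{-(2-p)}$ and integrating. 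Plugging this into Lemma \ref{Gamma_p} gives
\begin{equation*}
\Gamma_p(f)(x)\le p(p-1)\cdot\frac{1}{p}\int_M (f(x)-f(y))^2\,J(x,dy)=2(p-1)|\nabla f|^2(x),
\end{equation*}
where in the last equality I used the definition \eqref{g-1} of $|\nabla f|$.

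\textbf{Lower bound via $|\widetilde\nabla f|$.} The point is to show $I(f(x),f(y);p)\ge 1/2$ whenever $f(y)\le f(x)$. Since $p\in(1,2]$, so that $2-p\ge 0$, and since $(1-u)f(x)+uf(y)\le f(x)$ on $\{f(y)\le f(x)\}$, we get
\begin{equation*}
\frac{f(x)^{2-p}}{((1-u)f(x)+uf(y))^{2-p}}\ge 1, \qquad u\in[0,1],
\end{equation*}
hence $I(f(x),f(y);p)\ge\int_0^1(1-u)\,du=1/2$. Inserting this lower bound into Lemma \ref{Gamma_p}, restricted to $\{y:f(y)\le f(x)\}$, yields
\begin{equation*}
\Gamma_p(f)(x)\ge \frac{p(p-1)}{2}\int_{\{y:f(y)\le f(x)\}}(f(x)-f(y))^2\,J(x,dy)=\frac{p(p-1)}{2}|\widetilde\nabla f|^2(x),
\end{equation*}
where the last identity uses that $f\ge 0$ (so $|f|=f$) in the definition \eqref{g-21}. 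Rearranging gives \eqref{e:ffcc}.

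\textbf{Main obstacle.} There really is no obstacle beyond choosing the correct direction of the inequality for the weight $I(f(x),f(y);p)$: the asymmetric definition of $|\widetilde\nabla f|$ is tailor-made for the region where the denominator in $I$ is dominated by the numerator, and this is precisely where the bound $I\ge 1/2$ holds. The upper bound uses the opposite direction on all of $M$ and is immediate from the already-stated remark. Both inequalities thus reduce to elementary comparison of the $I$-weight under the constraint $2-p\ge 0$.
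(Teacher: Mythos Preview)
Your proof is correct and follows essentially the same route as the paper: both parts rely on Lemma~\ref{Gamma_p} together with the elementary bounds $I(f(x),f(y);p)\le 1/p$ on all of $M$ (yielding the upper estimate, exactly as in Remark~\ref{remark333}) and $I(f(x),f(y);p)\ge 1/2$ on $\{f(y)\le f(x)\}$ (yielding the lower estimate via the definition of $|\widetilde\nabla f|$). There is no substantive difference between your argument and the paper's.
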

\begin{proof} Let $p\in (1,2)$ and
$0\le f\in \mathscr{D}(L)\cap L^\infty(M,\mu)$. We assume that \eqref{e:bound} holds.

(i) It is clear from Lemma \ref{Gamma_p} that $\Gamma_p(f)(x)\ge0$. Observing that
$$(1-u)^{2-p}f(x)^{2-p}\le ((1-u)f(x)+uf(y))^{2-p},$$ for any $0\leq u\leq1$ and $f\ge0$, we obtain that
\begin{align*}
\Gamma_p(f)(x)&\le p(p-1)\int_0^1(1-u)^{p-1} \, du\int_M (f(x)-f(y))^2  J(x,y)\,\mu(dy)\\
&= 2(p-1)|\nabla f|^2(x).
\end{align*}

(ii) Observing that, for $0\leq f(y)< f(x)$, one has
$(1-u)f(x)+uf(y)\le f(x)$ for any $0\leq u\leq1$. Hence
$$I(f(x),f(y);p)=\int_0^1 \frac{(1-u)f(x)^{2-p}}{((1-u)f(x)+uf(y))^{2-p}}\,du\ge \int_0^1 (1-u)\,du =1/2.$$ This along with the definition of $|\widetilde \nabla f|^2$ yields the desired assertion.
\end{proof}
\begin{remark} It is easy to see that, in general, $\Gamma_p(f)(x)< 2(p-1)|\nabla f|^2(x)$ holds  for any $0\le f\in \mathscr{D}(L)\cap L^\infty(M,\mu)$ under the assumption \eqref{e:bound}. For example,
 for any function $f$ with $f\neq 0$ and $f(x)=0$ for some $x\in M$, we have $|\nabla f|^2(x)>0$ and $\Gamma_p(f)(x)=0$. Therefore, for $p\in (1,2]$, in general situations, one can use the bound on $\Gamma_p(f)$ to control $|\widetilde \nabla f|^2$ but not $|\nabla f|^2$.
\end{remark}

The following statement shows that \eqref{e:ffcc} indeed holds for all $f\in \mathscr{D}(L)\cap L^\infty(M,\mu)$, which  is one of the key ingredients in our proof.

 \begin{proposition}\label{P:comp} Under \eqref{e:bound}, for $p\in (1,2]$ and $f\in \mathscr{D}(L)\cap L^\infty(M,\mu)$,
$$0\le |\widetilde \nabla f|^2(x)\le 2/(p(p-1)) \Gamma_p(f)(x),$$
for any $x\in M$, where $|\widetilde\nabla f|$ is defined by \eqref{g-21}.
 \end{proposition}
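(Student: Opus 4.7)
The plan is to reduce the proposition to a pointwise inequality for the integrand of $\Gamma_p(f)$. Under the finiteness assumption \eqref{e:bound}, formula \eqref{a12} applies to both $f$ and $|f|^p \in L^\infty(M,\mu)$, so
$$\Gamma_p(f)(x) = \int_M \Psi(f(x), f(y))\, J(x, dy), \qquad \Psi(a,b) := pa(a-b) - |a|^{2-p}\bigl(|a|^p - |b|^p\bigr).$$
I would first exploit two symmetries: $\Gamma_p(-f) = \Gamma_p(f)$ is immediate from \eqref{e:ger0}, and $|\widetilde{\nabla}(-f)|^2 = |\widetilde{\nabla} f|^2$ because both the defining set $\{y : |f|(x) \geq |f|(y)\}$ and the squared difference $(f(x)-f(y))^2$ are invariant under $f \mapsto -f$. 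It therefore suffices to fix $x$ with $a := f(x) \geq 0$; the subcase $a = 0$ is trivial, since then $|\widetilde{\nabla} f|^2(x) = 0$, so I may further assume $a > 0$ and work with $\Psi(a,b) = (p-1)a^2 - pab + a^{2-p}|b|^p$.

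The goal is then to establish the two pointwise bounds
$$\Psi(a,b) \geq 0 \text{ for every } b \in \R, \qquad \Psi(a,b) \geq \tfrac{p(p-1)}{2}(a-b)^2 \text{ whenever } |b| \leq a,$$
after which integration against $J(x,dy)$ and discarding the non-negative contribution from $\{|f|(y) > f(x)\}$ yields the proposition. The analysis splits into the cases $b \geq 0$ and $b < 0$. For $b \geq 0$, the Taylor expansion carried out in the proof of Lemma \ref{Gamma_p} directly gives $\Psi(a,b) = p(p-1)(a-b)^2 I(a,b;p)$, hence non-negativity in general, and when $b \leq a$ the argument in the proof of Corollary \ref{comp}(ii) yields $I(a,b;p) \geq 1/2$, which is exactly the quantitative bound.

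The essential new work lies in the mixed-sign case $b < 0$. Writing $c = -b > 0$, one has
$$\Psi(a,-c) = (p-1)a^2 + pac + a^{2-p}c^p,$$
a sum of three non-negative terms, so non-negativity is immediate. Under the constraint $c \leq a$ (i.e.\ $|f|(y) \leq f(x)$), I would compare term-by-term with
$$\tfrac{p(p-1)}{2}(a+c)^2 = \tfrac{p(p-1)}{2}a^2 + p(p-1)ac + \tfrac{p(p-1)}{2}c^2.$$
The first two comparisons reduce to $p-1 \geq \tfrac{p(p-1)}{2}$ and $1 \geq p-1$, both equivalent to $p \leq 2$; the third uses $a \geq c$ and $2-p \geq 0$ to obtain $a^{2-p}c^p \geq c^2$, after which the remaining $1 \geq \tfrac{p(p-1)}{2}$ still holds on $(1,2]$.

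The anticipated main obstacle is precisely this mixed-sign case. The clean Taylor identity that drives Lemma \ref{Gamma_p} collapses when $f(x)$ and $f(y)$ have opposite signs, and one should resist the temptation to pass from $f$ to $|f|$ on the left-hand side, since the quantity being bounded is $(f(x)-f(y))^2$ rather than $(|f(x)|-|f(y)|)^2$; consequently the sign-sensitive cross term $pac$ genuinely appears and must be absorbed. The term-by-term bookkeeping above bypasses this difficulty using only the constraints $p \in (1,2]$ and $a \geq c$.
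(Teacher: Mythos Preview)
Your proof is correct, but the route differs from the paper's. You work directly with the pointwise integrand $\Psi(a,b)$, use the symmetry $f\mapsto -f$ to pin down $a=f(x)\ge 0$, and then split according to the sign of $b=f(y)$: the case $b\ge 0$ recycles Lemma~\ref{Gamma_p} and Corollary~\ref{comp}(ii), while the mixed-sign case $b=-c<0$ is dispatched by the clean term-by-term comparison $(p-1)a^2+pac+a^{2-p}c^p\ge \tfrac{p(p-1)}{2}(a+c)^2$. The paper instead decomposes at the operator level, writing $\Gamma_p(f)=\Gamma_p(|f|)+p\bigl(fLf-|f|L|f|\bigr)$; it invokes Corollary~\ref{comp} once for $\Gamma_p(|f|)$, rewrites the cross term as $p\int(|f|(x)|f|(y)-f(x)f(y))\,J(x,dy)\ge 0$, and then combines the two pieces via the algebraic inequality $\tfrac{p-1}{2}(|a|-|b|)^2+|a||b|-ab\ge\tfrac{p-1}{2}(a-b)^2$. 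Your approach is arguably more transparent in the mixed-sign case and avoids introducing $L|f|$; the paper's has the virtue of isolating exactly where the passage from $|f|$ to $f$ costs something, which is conceptually tidy since Corollary~\ref{comp} already handles non-negative functions in one stroke.
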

 \begin{proof} By Remark \ref{remark333} again, without loss of generality we may and can
  assume that $p\in (1,2)$. The proof is a little bit delicate and is based on Corollary \ref{comp}.
 For any $f\in \mathscr{D}(L)\cap L^\infty(M,\mu)$,
by \eqref{e:ger0},
\begin{align*}
\Gamma_p(f)&=pfLf-p|f|(L|f|)+p|f|L|f|-|f|^{2-p}L(|f|^p)\\
&=pfLf-p|f|L|f|+\Gamma_p(|f|).
\end{align*}
According to \eqref{e:ffcc} in Corollary \ref{comp}, it holds that
\begin{align*}\Gamma_p(|f|)(x)\ge &\frac{p(p-1)}{2}\big|\widetilde \nabla |f|\big|^2(x)\\
\ge &\frac{p(p-1)}{2}\int_{\{y\in M:|f|(x)\ge |f|(y)\}}(|f|(x)-|f|(y))^2\,J(x,dy).
\end{align*}

On the other hand,
\begin{align*}&pf(x)(Lf)(x)-p|f|(x)(L|f|)(x)\\
&=p\left(\int_M f(x)(f(x)-f(y))\,J(x,dy)-\int_M |f|(x)(|f|(x)-|f|(y))\,J(x,dy)\right)\\
&=p\int_M (|f|(x)|f|(y)-f(x)f(y))\,J(x,dy)\\
&\ge p\int_{\{y\in M:|f|(x)\ge |f|(y)\}}(|f|(x)|f|(y)-f(x)f(y))\,J(x,dy),  \end{align*}  where in the last inequality we used the fact that
$|f|(x)|f|(y)-f(x)f(y)\ge0$ for all $x,y\in M$.

Furthermore, we deduce that
\begin{align*}&\frac{p-1}{2}(|f|(x)-|f|(y))^2+|f|(x)|f|(y)-f(x)f(y)\\
&=\frac{p-1}{2}(f^2(x)+f^2(y))+(2-p)|f|(x)|f|(y)-f(x)f(y)\\
&\ge \frac{p-1}{2}(f^2(x)+f^2(y))+(1-p)|f|(x)|f|(y)\\
&\ge \frac{p-1}{2}(f^2(x)+f^2(y))-(p-1)f(x)f(y)\\
&=\frac{p-1}{2}(f(x)-f(y))^2,\end{align*}
where both inequalities follow from the fact that
$|f|(x)|f|(y)-f(x)f(y)\ge0$ for all $x,y\in M$ again.

Combining with all the inequalities above, we arrive at the desired assertion.
 \end{proof}

Recall that $(P_t)_{t\geq0}$ with $P_t=e^{-tL}$ is the semigroup corresponding to the Dirichlet form $(D,\mathscr{F})$.
\begin{proposition}\label{pre}\,\ Let $(M,d,\mu)$ be a metric measure space, and $(D,\mathscr{F})$ be the Dirichlet form defined in \eqref{nondi}.
Suppose that  \eqref{e:bound} holds. Then, for any $p\in (1,2]$, the following assertions hold.
\begin{itemize}
\item[{\rm(i)}]  There exists a constant $c_p>0$ such that, for all $t>0$ and $f\in L^1(M,\mu)\cap L^\infty(M,\mu)$,
$$\|\Gamma_p^{1/2}(P_tf)\|_p\le c_p t^{-1/2} \|f\|_p.$$

\item[{\rm(ii)}]  For any $f\in L^1(M,\mu)\cap L^\infty(M,\mu)$, define
$$(\mathscr{H}_pf)(x)=\left(\int_0^\infty\Gamma_p (P_tf)(x)\,dt\right)^{1/2}\quad\mbox{for every } x\in M.$$
Then there is a constant $c_p'>0$ such that, for all $f\in L^1(M,\mu)\cap L^\infty(M,\mu)$,
$$\|\mathscr{H}_pf\|_p\le c'_p\|f\|_p.$$
\end{itemize}
\end{proposition}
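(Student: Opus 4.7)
The plan is to derive both (i) and (ii) from three ingredients: a Hardy--Stein type identity, a pointwise H\"older factorization of $\Gamma_p^{p/2}$, and Stein's theorems for symmetric Markov semigroups. Throughout, $|g|^{p-2}\Gamma_p(g)$ is set to $0$ on $\{g=0\}$, which is consistent because $\Gamma_p(g)$ vanishes there as well (as one checks from the explicit formula in Lemma \ref{Gamma_p} together with the decomposition $\Gamma_p(f)=pfLf-p|f|L|f|+\Gamma_p(|f|)$ appearing in the proof of Proposition \ref{P:comp}).

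Setting $F(t)=\|P_tf\|_p^p$, multiplying \eqref{e:ger0} by $|g|^{p-2}$ and integrating yields
$$\int_M|g|^{p-2}\Gamma_p(g)\,d\mu=p\int_M|g|^{p-2}gLg\,d\mu-\int_M L(|g|^p)\,d\mu;$$
under \eqref{e:bound} the representation \eqref{a12} and the symmetry $J(x,dy)\mu(dx)=J(y,dx)\mu(dy)$ force $\int_M L(|g|^p)\,d\mu=0$ whenever $g\in L^\infty(M,\mu)$. Differentiating $F$ in $t$ and using self-adjointness then gives the Hardy--Stein identity
$$-F'(t)=\int_M|P_tf|^{p-2}\Gamma_p(P_tf)\,d\mu,\qquad\int_0^\infty\int_M|P_tf|^{p-2}\Gamma_p(P_tf)\,d\mu\,dt\le\|f\|_p^p.$$
The crucial pointwise factorization is $\Gamma_p(g)^{p/2}=\bigl(|g|^{p-2}\Gamma_p(g)\bigr)^{p/2}|g|^{p(2-p)/2}$, and H\"older's inequality with conjugate exponents $2/p$ and $2/(2-p)$ yields
$$\int_M\Gamma_p(g)^{p/2}\,d\mu\le\Bigl(\int_M|g|^{p-2}\Gamma_p(g)\,d\mu\Bigr)^{p/2}\|g\|_p^{p(2-p)/2}.$$

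For (i), apply the factorization at $g=P_tf$ and use $\|P_tf\|_p\le\|f\|_p$. By Stein's theorem on analyticity of symmetric Markov semigroups on $L^p(M,\mu)$ for $1<p<\infty$, one has $\|LP_tf\|_p\le C_pt^{-1}\|f\|_p$; combined with the duality pairing,
$$-F'(t)=p\bigl\langle|P_tf|^{p-2}P_tf,LP_tf\bigr\rangle\le p\|LP_tf\|_p\|P_tf\|_p^{p-1}\le C'_pt^{-1}\|f\|_p^p,$$
and substitution together with $p(2-p)/2+p^2/2=p$ gives $\|\Gamma_p^{1/2}(P_tf)\|_p\le c_pt^{-1/2}\|f\|_p$.

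For (ii), let $f^*(x)=\sup_{t>0}|P_tf(x)|$. Since $p-2\le0$, $|P_tf(x)|^{p-2}\ge f^*(x)^{p-2}$ pointwise on $\{f^*>0\}$, so
$$\mathscr{H}_pf(x)^2=\int_0^\infty\Gamma_p(P_tf)(x)\,dt\le f^*(x)^{2-p}\int_0^\infty|P_tf(x)|^{p-2}\Gamma_p(P_tf)(x)\,dt.$$
Raising to $p/2$, integrating in $x$, and applying H\"older with exponents $2/(2-p)$ and $2/p$ gives
$$\|\mathscr{H}_pf\|_p^p\le\|f^*\|_p^{p(2-p)/2}\Bigl(\int_0^\infty\int_M|P_tf|^{p-2}\Gamma_p(P_tf)\,d\mu\,dt\Bigr)^{p/2}\le C\|f\|_p^p,$$
by the integrated Hardy--Stein estimate and Stein's maximal theorem ($\|f^*\|_p\le C_p\|f\|_p$ for $p>1$), using $(2-p)/2+p/2=1$. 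The main obstacle is the analyticity estimate $\|LP_tf\|_p\le C_pt^{-1}\|f\|_p$ needed in (i): in a pure jump setting it is convenient to deduce it abstractly from Stein's functional calculus for symmetric Markov semigroups rather than directly from \eqref{e:bound}; everything else is a routine manipulation of the pseudo-gradient identity.
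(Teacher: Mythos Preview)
Your strategy is essentially the paper's: the pointwise identity $|u_t|^{p-2}\Gamma_p(u_t)=-(\partial_t+L)(|u_t|^p)$, the H\"older factorization $\Gamma_p(g)^{p/2}=(|g|^{p-2}\Gamma_p(g))^{p/2}|g|^{p(2-p)/2}$, Stein's analyticity estimate $\|LP_tf\|_p\le C_p t^{-1}\|f\|_p$ for (i), and the maximal inequality plus the integrated bound for (ii). The paper writes $J_t:=-(\partial_t+L)(|u_t|^p)=|u_t|^{p-2}\Gamma_p(u_t)$ and proceeds exactly along your lines.

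There is, however, one genuine gap: your claim that under \eqref{e:bound} ``symmetry forces $\int_M L(|g|^p)\,d\mu=0$ whenever $g\in L^\infty(M,\mu)$''. The hypothesis \eqref{e:bound} gives $\int_{M\setminus\{x\}}J(x,dy)<\infty$ for each $x$, but not uniformly in $x$, so the double integral $\iint (|g|^p(x)-|g|^p(y))\,J(x,dy)\,\mu(dx)$ is not a priori absolutely convergent and the antisymmetry argument cannot be applied directly. The paper does not try to prove equality; it only proves the inequality $\int_M L|u_t|^p\,d\mu\ge 0$, which is all that is needed (it gives $\int J_t\,d\mu\le -\int\partial_t|u_t|^p\,d\mu$, hence your Hardy--Stein bound with $\le$ in place of $=$). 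The paper's argument runs as follows: from $L|u_t|^p=-\partial_t|u_t|^p-J_t$ with $J_t\ge 0$ and $\partial_t|u_t|^p\in L^1$ one gets $(L|u_t|^p)^+\in L^1$; then one tests against a sequence $0\le\varphi_n\uparrow 1$ with $\varphi_n$ bounded and equal to $1$ on compacts, uses the extended Fatou lemma to pass the $\limsup$ inside, transfers $L$ onto $\varphi_n$ by symmetry (now justified since $|u_t|^p\in L^1\cap L^\infty$), and finally shows $L\varphi_n(x)\to 0$ pointwise and applies dominated convergence (using \eqref{e:ffee}). You should insert this step; the rest of your argument then coincides with the paper's.
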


\begin{proof}
Noticing that $(P_t)_{t\ge0}$ is a strongly continuous Markovian semigroup defined on $L^2(M,\mu)$, the operator $P_t$ can be extended to $L^\infty(M,\mu)$ such that $\|P_tf\|_{\infty}\le \|f\|_{\infty}$ (see \cite[Page\ 56]{FOT}). On the other hand, we can also extend $P_t$ on $L^1(M,\mu)\cap L^2(M,\mu)$ to $L^1(M,\mu)$ uniquely. Since $(P_t)_{t\ge0}$ is a symmetric Markovian semigroup, it holds that $\|P_tf\|_{1}\le \|f\|_{1}$. By the Riesz--Thorin interpolation theorem, for all $p\in(1,\infty)$, we have $$\|P_t\|_{p\to p}:=\sup_{f\in L^p(M,\mu)\setminus\{0\}}\frac{\|P_tf\|_{p}}{\|f\|_{p}}\le 1.$$

(i) In the following, let $p\in (1,2]$, and consider any non-zero function $f\in L^1(M,\mu)\cap L^\infty(M,\mu)$. Set $u_t=P_tf$ for all $t\ge 0$. Then $u_t\in \mathscr{D}(L)\cap L^1(M,\mu)\cap L^\infty(M,\mu)$ for every $t>0$.
In what follows, let $\partial_t$ denote the differentiation with respect to $t$.
The fundamental idea of the proof below is due to \cite{Stein}; however, we may not reduce the problem to take non-negative function $f$ as in the aforementioned reference, since $|\widetilde\nabla \cdot|$ does not enjoy the sublinear property.
 Instead, we should take into account $|u_t|$ not $u_t$ itself, which also explains the reason why we need to define the pseudo-gradient $\Gamma_p$ for all suitable signed functions by \eqref{e:ger0}.
 By the definition of $\Gamma_p$ and the fact that \begin{equation}\label{eq-t}\partial_t(|u_t|^p)=pu_t|u_t|^{p-2}\partial_tu_t=-pu_t|u_t|^{p-2}Lu_t,\end{equation}
we have
\begin{align*}|u_t|^{p-2}\Gamma_p(u_t)&=|u_t|^{p-2}\left(pu_t Lu_t-|u_t|^{2-p}L|u_t|^p\right)\\
&=pu_t |u_t|^{p-2}(Lu_t)-L(|u_t|^p)\\
&=-(\partial_t+L)(|u_t|^p).\end{align*}
It follows that
$$\Gamma_p(u_t)=-|u_t|^{2-p}(\partial_t+L)(|u_t|^p).$$
Set
$$J_t:=-(\partial_t+L)(|u_t|^p).$$
Then $J_t\geq0$ since $\Gamma_p(u_t)\geq0$ by Lemma \ref{Gamma_p}. Using the H\"{o}lder inequality, we have
\begin{align*} \|\Gamma_p^{1/2}(u_t)\|_p^p&=\int_M \Gamma_p^{p/2}(u_t)(x)\,\mu(dx)=\int_M |u_t|^{p(2-p)/2}(x) J_t^{p/2}(x)\,\mu(dx)\\
&\le \left(\int_M J_t(x)\,\mu(dx)\right)^{p/2}\left(\int_M |u_t|^p(x)\,\mu(dx)\right)^{(2-p)/2}.\end{align*}

Note that, by the
contraction
property of  the semigroup $(P_t)_{t\ge0}$ on $L^p(M,\mu)$,
$$\left(\int_M |u_t|^p(x)\,\mu(dx)\right)^{(2-p)/2}\le \|f\|_p^{p(2-p)/2}.$$
On the other hand, by H\"{o}lder's inequality and the contraction property again,
\begin{align*}\int_M |\partial _t|u_t|^p|(x)\,\mu(dx)=&p\int_M |u_t|^{p-1}| |\partial_tu_t|\,d\mu \le p\|\partial_tu\|_p\||u_t|^{p-1}\|_{p/(p-1)}\\
\le& p\|\partial_tu_t\|_p\|f\|_p^{p-1}.
\end{align*}

Recall that $L$ is self-adjoint in $L^2(M,\mu)$ and $P_t$ is continuous as a map from $L^p(M,\mu)$ to itself for every $t\ge0$ and for every $p\in[1,\infty]$. By the classical theory developed by Stein (see e.g.  \cite[Chapter III, Theorem 1]{Stein}), we derive that $(P_t)_{t>0}$ is an analytic semigroup in $L^p(M,\mu)$ for every $p\in(1,\infty)$; more precisely, the map $t\mapsto P_t$ has an analytic extension in the sense that it extends to an analytic $L^p(M, \mu)$-operator-valued function $t + is\mapsto P_{t+is}=e^{-(t+is)L}$, which is defined in the sector of the complex plane
$$|\arg(t+is)|<\frac{\pi}{2}\Big(1-\Big|\frac{2}{p}-1\Big|\Big).$$
Hence, we find that $$\|\partial_tu_t\|_p=\|Lu_t\|_p\le c_p t^{-1}\|f\|_p.$$ Thus, together with \eqref{eq-t},
\begin{equation}\label{eeff} \int_M |\partial _t|u_t|^p|(x)\,\mu(dx)\le pc_p t^{-1}\|f\|_p^{p}.\end{equation}
In particular, due to $J_t\ge0$ and
$$L|u_t|^p=-\partial_t|u_t|^p-J_t,$$
we get that
\begin{equation}\label{eeff0} \mu((L|u_t|^p)^+)<\infty.\end{equation}

Thus, \begin{equation}\label{eeff1}\int_M L|u_t|^p(x)\,\mu(dx)\ge0.\end{equation}
Indeed, let $\{K_n\}_{n\ge1}$  be a sequence of increasing compact sets such that $\cup_{n=1}^\infty K_n=M$, and
let $\{\varphi_n\}_{n\ge1}$ be a sequence of bounded measurable functions such that $\varphi_n=1$ on $K_n$, and $0\le \varphi_n\le 1$  on $K_n^c$.
Using \eqref{eeff0} and the extension of Fatou's lemma (see \cite[Theorem 3.2.6 (2), p.\ 52]{YJA}), we get
\begin{equation*}\begin{split}
\int_M L|u_t|^p(x)\,\mu(dx)&\ge\limsup_{n\to \infty} \int_M \varphi_n(x) (L|u_t|^p)(x)\,\mu(dx)\\
&= \limsup_{n\to \infty} \int_M |u_t|^p(x) (L \varphi_n)(x)\,\mu(dx),
\end{split} \end{equation*}
where in the equality above we used the symmetry property of $J(x,dy)\,\mu(dx)$ and the facts that $Lf$ is pointwise $\mu$-a.e. well defined for any bounded measurable function $f$ and $|u_t|^p\in L^1(M,\mu)$.
On the other hand, since $(D,\F)$ is a regular Dirichlet form on $L^2(M,\mu)$, for any relatively compact open sets $U$ and $V$ with $\bar{U}\subset V$, there is a function $\psi\in \F\cap C_c(M)$ such that $\psi=1$ on $U$ and $\psi=0$ on $V^c$. Consequently,
\begin{equation}\label{e:ffee}\begin{split}\iint_{U\times V^c} J(x,dy)\,\mu(dx)=&\iint_{U\times V^c}(\psi(x)-\psi(y))^2J(x,dy)\,\mu(dx)\\
\le &D(\psi,\psi)<\infty.\end{split}\end{equation}
For any fixed $x\in M$ and any $\varepsilon>0$, by \eqref{e:bound}, we can choose $R:=R(x,\varepsilon)>0$ large enough such that
$$\displaystyle\int_{\{y\in M: d(x,y)\ge R\}}\,J(x,dy)<\varepsilon.$$
 Fix this $R$. Then, for $n\ge 1$ large enough, $\varphi_n(y)=1$ for all $y\in M$ with $d(x,y)< R$. Thus, for $n\ge 1$ large enough,
$$|L\varphi_n(x)|=\bigg|\int_M\left(\varphi_n(x)-\varphi_n(y)\right)\, J(x,d y) \bigg|\le \int_{\{y\in M: d(x,y)\ge R\}}\,J(x,dy)<\varepsilon,$$
which means that $\lim_{n\to\infty }L\varphi_n(x)=0$ for all $x\in M$.
This, along with \eqref{e:ffee}, the fact that $|u_t|^p\in L^1(M,\mu)\cap L^\infty(M,\mu)$ and the dominated convergence theorem, gives
$$\limsup_{n\to \infty} \int_M |u_t|^p(x) (L \varphi_n)(x)\,\mu(dx)=0.$$ So, \eqref{eeff1} holds
true.

\eqref{eeff1} together with \eqref{eeff} yields that
\begin{align*}\int_M J_t(x)\,\mu(dx)&=-\int_M\partial _t|u_t|^p(x)\,\mu(dx)-\int_M L|u_t|^p(x)\,\mu(dx)\le pc_p t^{-1}\|f\|_p^{p}.\end{align*}
Hence,
$$ \left(\int_M J_t(x)\,\mu(dx)\right)^{p/2}\le c'_p t^{-p/2}\|f\|_p^{p^2/2}.$$

Combining with all the conclusions above, we prove the first assertion.

(ii) Note that
\begin{align*}(\mathscr{H}_pf)^2(x)&=\int_0^\infty \Gamma_p(u_t)(x)\,dt\\
&=-\int_0^\infty |u_t|(x)^{2-p}(\partial_t+L)(|u_t|^p)(x)\,dt\\
&\le f^*(x)^{2-p} J(x),
\end{align*}
where $f^*$ is the semigroup maximal function defined by \eqref{maxf} below, and
$$J(x)=-\int_0^\infty (\partial_t+L)(|u_t|^p)(x)\,dt,$$
which is non-negative. Thus, by using the H\"{o}lder inequality,
\begin{align*}\int_M( \mathscr{H}_p f)^p(x)\,\mu(dx)\le&\int_M f^*(x)^{p(2-p)/2}J(x)^{p/2}\,\mu(dx)\\
\le &\left(\int_M f^*(x)^p\,\mu(dx)\right)^{(2-p)/2}\left(\int_M J(x)\,\mu(dx)\right)^{p/2}.\end{align*}
 Lemma \ref{max} below further yields that
 $$\left(\int_M f^*(x)^p\,\mu(dx)\right)^{(2-p)/2}\le c_p'\|f\|_p^{p(2-p)/2}.$$
 On the other hand, by \eqref{eeff1},
 \begin{align*}\int_M J(x)\,\mu(dx) =&-\int_0^\infty dt \int_M (\partial_t+L) |u_t|^p(x)\,\mu(dx)\\
 \le&-\int_0^\infty dt \int_M \partial_t |u_t|^p(x)\,\mu(dx)\\
 \le& \int_M |f|^p(x)\,\mu(dx)=\|f\|_p^p.\end{align*}
 Combining all the inequalities above, we obtain that
 $$\int_M( \mathscr{H}_p f)^p(x)\,\mu(dx)\le c_p'\|f\|_p^p,$$
 which is the second assertion.

 Therefore, the proof is complete.
 \end{proof}

For any $f\in L^1(M,\mu)\cap L^\infty(M,\mu)$, define the semigroup maximal function $f^*$ by
\begin{equation}\label{maxf}f^*(x)=\sup_{t>0} |P_tf(x)|,\quad x\in M.\end{equation}
Since $(P_t)_{t\ge0}$ is a symmetric sub-Markovian semigroup, we have the following
\begin{lemma}\label{max}$($\cite[Section III. 3, p.\ 73]{Stein}$)$
For all $p\in (1,\infty]$, there exists a constant $c_p>0$ such that for all $f\in L^p(M,\mu)$,
$$\|f^*\|_p\le c_p\|f\|_p,$$
where, for $p=\infty$, the right hand side is just $\|f\|_{\infty}$ $($i.e., the constant $c_\infty=1)$.
\end{lemma}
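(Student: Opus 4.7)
This is the classical maximal theorem of Stein for symmetric sub-Markovian semigroups, so my plan is to follow his original strategy. The endpoint $p=\infty$ is immediate from the sub-Markovian property: $|P_t f(x)| \le \|f\|_\infty$ for every $t>0$ and $\mu$-a.e.\ $x\in M$, so $\|f^*\|_\infty \le \|f\|_\infty$, giving $c_\infty=1$.

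For $1<p<\infty$ my plan is to reduce the continuous maximal function to a discrete one via the Hopf--Dunford--Schwartz maximal ergodic theorem, and then bridge the gap using analyticity. For each fixed $\tau>0$, the iterates $\{P_{n\tau}\}_{n\ge 0}$ form a discrete semigroup of positive contractions on every $L^p(M,\mu)$; the $L^1$--$L^\infty$ contractivity (and thus contractivity on all $L^p$ via Riesz--Thorin) is precisely the estimate already used at the start of the proof of Proposition \ref{pre}, and positivity follows from sub-Markovianity. The Hopf--Dunford--Schwartz theorem applied to $P_\tau$ yields a weak $(1,1)$ bound and a strong $(p,p)$ bound for the ergodic averages $A_N f = \frac{1}{N+1}\sum_{n=0}^{N} P_{n\tau} f$; by a standard dominated ergodic comparison (or Akcoglu's theorem) this transfers to
$$\Big\| \sup_{n\ge 0} |P_{n\tau} f| \Big\|_p \le c_p \|f\|_p, \qquad 1<p<\infty,$$
with a constant $c_p$ independent of $\tau$.

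The main obstacle is then the passage from this discrete supremum to the continuous supremum $\sup_{t>0}|P_t f|$. Here I would exploit the analyticity of $(P_t)$ on $L^p(M,\mu)$, which was already invoked in the proof of Proposition \ref{pre}: for $t\in[n\tau,(n+1)\tau]$ one writes
$$P_t f - P_{n\tau} f = -\int_{n\tau}^{t} L P_s f \, ds,$$
and the analyticity bound $\|LP_s f\|_p \le c_p s^{-1} \|f\|_p$ controls the oscillation inside each $\tau$-block by an auxiliary maximal function that can itself be dominated, after iterating and choosing $\tau$ appropriately, by the discrete maximal function handled in the previous step. Combining the two estimates and passing to the limit yields $\|f^*\|_p \le c_p \|f\|_p$ for every $1<p<\infty$. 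The delicate point I expect to be the real work is making the discrete-to-continuous reduction uniform in $\tau$, for which Stein's argument relies essentially on symmetry (ensuring analyticity of the semigroup on every $L^p$); any weakening of symmetry would require a separate argument here.
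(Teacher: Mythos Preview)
The paper does not prove this lemma; it is cited directly from Stein's monograph as a known result. So your sketch has to be measured against Stein's own argument.

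The case $p=\infty$ is correct. The genuine gap is the step ``by a standard dominated ergodic comparison (or Akcoglu's theorem) this transfers to $\|\sup_{n\ge 0}|P_{n\tau}f|\|_p\le c_p\|f\|_p$.'' Hopf--Dunford--Schwartz and Akcoglu bound only the maximal \emph{averages} $\sup_N\bigl|\frac{1}{N+1}\sum_{n=0}^N T^n f\bigr|$, never the maximal \emph{iterates} $\sup_n|T^n f|$. For a general positive $L^1$--$L^\infty$ contraction the iterate maximal function need not lie in $L^p$: take $T$ the unit shift on $L^p(\R)$ and $f=\I_{[0,1]}$, so that $\sup_{n\ge 0} T^n f=1$ on $[0,\infty)$. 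The iterate bound \emph{is} true for the self-adjoint operator $P_\tau$, but establishing it already requires Stein's idea, so invoking it as ``standard'' makes the argument circular.

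Stein's route does not discretise. He first bounds the continuous maximal average $M^*f=\sup_{t>0}\bigl|\frac{1}{t}\int_0^t P_s f\,ds\bigr|$ in $L^p$ via Hopf--Dunford--Schwartz (this uses only positivity and contractivity), and then passes from $M^*f$ to $\sup_t|P_t f|$ by an argument that exploits self-adjointness through the spectral resolution of $L$ (fractional averages $M_t^\alpha$ and analytic interpolation in $\alpha$). That passage---from averages to the semigroup itself---is the heart of the proof, and it is precisely the step you have skipped by attributing it to the ergodic theorems. Your analyticity estimate $\|LP_s f\|_p\le c_p s^{-1}\|f\|_p$ is only an $L^p$-norm bound and gives no pointwise control of $P_t f(x)-P_{n\tau}f(x)$, so it cannot carry the discrete-to-continuous bridge your outline assigns to it either.
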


Furthermore, according to Proposition \ref{P:comp} and  Proposition \ref{pre},  
we immediately have the following
\begin{theorem}\label{th1} {\bf (Finite jumping kernel case)}\,\,
Under the same assumption of Proposition $\ref{pre}$, for any $p\in (1,2]$,
 $\mathscr{H}_{\widetilde \nabla}$ is bounded in $L^p(M,\mu)$, i.e., there exists a constant $c_p>0$ such that, for every $f\in L^p(M,\mu)$,
 $$\|\mathscr{H}_{\widetilde \nabla}f\|_{p}\le c_p\|f\|_{p}.$$
Moreover, there exists a constant $\tilde{c}_p>0$ such that, for every $f\in L^p(M,\mu)$,
 $$\||\widetilde \nabla P_t f|\|_{p}\le \tilde{c}_p t^{-1/2}\|f\|_{p}.$$ \end{theorem}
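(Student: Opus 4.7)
The plan is to combine the pointwise pseudo-gradient comparison from Proposition \ref{P:comp} with the $L^p$ estimates of Proposition \ref{pre}, and then to extend the resulting bounds to all of $L^p(M,\mu)$ by density. For $f \in L^1(M,\mu) \cap L^\infty(M,\mu)$ and any $t > 0$, the $L^p$-analyticity of $(P_t)_{t \geq 0}$ already exploited in the proof of Proposition \ref{pre} ensures that $u_t := P_t f \in \mathscr{D}(L) \cap L^\infty(M,\mu)$. Under the finiteness assumption \eqref{e:bound}, Proposition \ref{P:comp} then applies and yields the pointwise bound
$$|\widetilde\nabla P_t f|^2(x) \leq \frac{2}{p(p-1)}\, \Gamma_p(P_t f)(x), \qquad x \in M,\ t > 0.$$

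Both conclusions of the theorem follow in one stroke from this pointwise inequality. Raising its square root to the $p$-th power, integrating against $\mu$, and appealing to Proposition \ref{pre}(i), yields the off-diagonal estimate
$$\||\widetilde\nabla P_t f|\|_p \leq \sqrt{\tfrac{2}{p(p-1)}}\, \|\Gamma_p^{1/2}(P_t f)\|_p \leq \sqrt{\tfrac{2}{p(p-1)}}\, c_p\, t^{-1/2}\|f\|_p.$$
On the other hand, integrating the pointwise inequality in $t$ over $(0,\infty)$ and taking square roots gives $\mathscr{H}_{\widetilde\nabla}(f)(x) \leq \sqrt{2/(p(p-1))}\, \mathscr{H}_p(f)(x)$ pointwise, so that Proposition \ref{pre}(ii) produces the main bound
$$\|\mathscr{H}_{\widetilde\nabla} f\|_p \leq \sqrt{\tfrac{2}{p(p-1)}}\, c'_p\, \|f\|_p.$$

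To pass from the dense class $L^1(M,\mu) \cap L^\infty(M,\mu)$ to an arbitrary $f \in L^p(M,\mu)$, I would approximate $f$ in $L^p$ by a sequence $f_n \in L^1(M,\mu) \cap L^\infty(M,\mu)$, exploit the $L^p$-contractivity of $P_t$ to obtain $L^p$-convergence of $P_t f_n$ to $P_t f$, and invoke Fatou's lemma both on the inner integral defining $|\widetilde\nabla \cdot|^2(x)$ and on the time integral defining $\mathscr{H}_{\widetilde\nabla}$. The only mildly delicate point is the $f$-dependent truncation set $\{y : |P_t f|(x) \geq |P_t f|(y)\}$ appearing in the definition of $|\widetilde\nabla \cdot|$, but for an upper bound this is harmless once one passes to a subsequence converging $\mu$-a.e. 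Beyond this routine density step the proof is mechanical, so I do not anticipate any serious obstacle: all the substantive work has already been carried out in Propositions \ref{P:comp} and \ref{pre}.
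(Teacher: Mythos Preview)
Your proposal is correct and follows essentially the same route as the paper: apply Proposition \ref{P:comp} to $P_tf$ for $f\in L^1\cap L^\infty$ to get the pointwise bound $|\widetilde\nabla P_tf|^2\le \tfrac{2}{p(p-1)}\Gamma_p(P_tf)$, feed this into Proposition \ref{pre}(i) and (ii), and then extend to all of $L^p$ by density and Fatou's lemma. The paper's own proof is identical in structure and brevity, with the density step handled just as routinely as you describe.
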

\begin{proof}
For any $f\in L^1(M,
\mu)\cap L^\infty(M,\mu)$, $P_tf$  belongs to $\mathscr{D}(L)\cap L^1(M,\mu)\cap L^\infty(M,\mu)$ for every $t>0$. By Proposition \ref{P:comp}, we deduce that
\begin{align*}
\mathscr{H}_{\widetilde \nabla} f(x)&=\left(\int_0^\infty |\widetilde \nabla P_tf|^2(x)\,dt \right)^{1/2}\\
&\leq\left(\int_0^\infty\Big|\frac{2}{p(p-1)}\Gamma_p(P_tf)(x)\Big|\,d t\right)^{1/2}\\
&=\Big(\frac{2}{p(p-1)}\Big)^{1/2}(\mathscr{H}_pf)(x).
\end{align*}
By Proposition \ref{pre}(ii), we have
$$\|\mathscr{H}_{\widetilde \nabla} f\|_p\leq c_p'\Big(\frac{2}{p(p-1)}\Big)^{1/2}\|f\|_p=:c_p\|f\|_p.$$
Now the general case when $f\in L^p(M,\mu)$ follows from the fact that $L^1(M,\mu)\cap L^\infty(M,\mu)$ is dense in $L^p(M,\mu)$ and the application of Fatou's lemma.

The last assertion is also an immediate application of Proposition \ref{P:comp} and Proposition \ref{pre}(i).
\end{proof}
\subsection{Boundedness of Littlewood--Paley functions for $1<p\le 2$: the general case}
In this part, we consider the case that \eqref{e:bound} is not necessarily satisfied. In this general setting,
it is not clear that $Lf^p$ and so $\Gamma_p(f)$ given by \eqref{e:ger} are well defined for $p\in(1,2]$
and $0\le f\in \mathscr{D}(L)\cap L^\infty(M,\mu)$ (even in the pointwise sense).  (Note that, according to \cite[Theorem 2]{Mey},
if $f\in \mathscr{D}(L)\cap L^\infty(M,\mu)$, then $f^p\in \mathscr{D}(L)\cap L^\infty(M,\mu)$ for all $p\in[2,\infty)$.) To overcome this difficulty, we will make use of the Mosco convergence of non-local Dirichlet forms, and impose the absolute continuity and
the local finiteness assumptions on the jumping kernel $J(x,dy)$, i.e., there is a non-negative measurable function $J(x,y)$ on $M\times M \setminus {\rm diag}$ such that, for every $x,y\in M$,
\begin{equation}\label{e:ack}
J(x,dy)=J(x,y)\,\mu(dy),
\end{equation}
and
\begin{equation}
\label{e:ack1}\int_{\{y\in M: d(x,y)>r\}}J(x,y)\,\mu(dy)<\infty,\quad x\in M,\, r>0.
\end{equation}

\ \

Recall that, a sequence of Dirichlet forms $\{(D^n,\F^n)\}_{n\ge1}$ on $L^2(M,\mu)$ is said to be convergent to a Dirichlet
form $(D,\F)$ in $L^2(M,\mu)$ in the sense of Mosco if
\begin{itemize}
\item[(a)] for every sequence $\{f_n\}_{n\ge1}$ in $L^2(M,\mu)$ converging weakly to $f$ in $L^2(M,\mu)$,
$$\liminf_{n\to \infty} D^n(f_n,f_n)\ge D(f,f);$$
\item[(b)] for every $f\in L^2(M,\mu)$, there is a sequence $\{f_n\}_{n\ge1}$ in $L^2(M,\mu)$ converging strongly to $f$ in $L^2(M,\mu)$ such that
    $$\limsup_{n\to \infty} D^n(f_n,f_n)\le D(f,f).$$
\end{itemize}

\begin{remark}\label{r:mos} We make the following two comments on Mosco convergence.
\begin{itemize}
\item[(1)] Condition (b) in the definition of Mosco convergence is implied by the following condition:
\begin{itemize}
\item[(b)'] There is a common core $\mathscr{C}$ for the Dirichlet forms $\{(D^n,\F^n)\}_{n\ge1}$ and $(D,\F)$ such that
$$\lim_{n\to\infty} D^n(f,f)=D(f,f),\quad\mbox{for every } f\in \mathscr{C}.$$ See the proof of \cite[Thoorem 2.3]{BBCK}.
\end{itemize}
\item[(2)] Let $\{(D^n,\F^n)\}_{n\ge1}$ and $(D,\F)$ be Dirichlet
forms on $L^2(M,\mu)$. Then, the sequence $\{(D^n,\F^n)\}_{n\ge1}$ converges to $(D,\F)$ in the sense of Mosco, if and only if, for every $t>0$ and $f\in L^2(M,\mu)$, $P^{(n)}_tf$ converges to $P_tf$ in $L^2(M,\mu)$, where $(P_t)_{t\ge0}$ and $(P^{(n)}_t)_{t\ge0}$ are the semigroups corresponding to $(D,\F)$ and $(D^n,\F^n)$, respectively. See \cite[Corollary 2.6.1]{Mo1994}.
\end{itemize}
\end{remark}

\ \

Now, we consider the regular non-local Dirichlet form $(D,\F)$ given by \eqref{nondi}, and suppose that \eqref{e:ack} is satisfied.
For any $n\ge 1$ and $x,y\in M$, define
$$J_n(x,y)=J(x,y)\I_{\{d(x,y)>1/n\}}.$$
Note that \eqref{e:ffee} holds for general regular Dirichlet form $(D,\F)$.
 Then, by the definition of $J_n(x,y)$ and \eqref{e:ffee}, the sequence $\{J_n(x,y)\}_{n\ge1}$ converges to $J(x,y)$ locally in $L^1(M\times M\backslash {\rm diag}, \mu\times \mu)$.

Let $\mathscr{C}:=\F\cap C_c(M)$ be a core of $(D,\F)$.
For any $n\ge1$, we define the regular Dirichlet form $(D^n,\F^n)$ as follows
\begin{align*}
D^n(f,f)=&\iint (f(x)-f(y))^2 J_n(x,y)\,\mu(dx)\,\mu(dy),\\
\F^n=& \overline{\mathscr{C}}^{\sqrt{D^{n,1}}},\end{align*} where ${D^{n,1}}(f,f)=D^n(f,f)+\|f\|_2^2$
and $\overline{\mathscr{C}}^{\sqrt{D^{n,1}}}$ denotes the closure of $\mathscr{C}$ with respect to the metric $\sqrt{D^{n,1}}$.
Note that $J_n(x,y)\le J(x,y)$, $\F\subset \F^n$ for all $n\ge 1$. In particular, $\mathscr{C}$ is a common core for all $(D^n,\F^n)$, $n\geq1$. Furthermore, we have the following statement.

\begin{proposition}\label{P:mos} Under \eqref{e:ack}, the sequence of Dirichlet forms $\{(D^n,\F^n)\}_{n\ge1}$ above converges to $(D,\F)$ in the sense of Mosco. \end{proposition}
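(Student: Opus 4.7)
The plan is to verify conditions (a) and (b) in the definition of Mosco convergence separately, exploiting the monotone structure $J_n\uparrow J$.

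First I will handle condition (b) via the criterion (b)' in Remark \ref{r:mos}(1), using the common core $\mathscr{C}=\F\cap C_c(M)$ already isolated before the statement. For $f\in\mathscr{C}$,
\begin{equation*}
D(f,f)-D^n(f,f)=\iint_{\{d(x,y)\le 1/n\}}(f(x)-f(y))^2 J(x,y)\,\mu(dx)\,\mu(dy).
\end{equation*}
Since $f\in\F$, the integrand is integrable on $M\times M\setminus\mathrm{diag}$; as the indicator shrinks to the diagonal, dominated convergence gives $D^n(f,f)\to D(f,f)$, settling (b).

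For condition (a), let $\{f_k\}_{k\ge1}$ converge weakly to $f$ in $L^2(M,\mu)$. The key observation is that for every fixed $m\ge 1$, the form $(D^m,\F^m)$ is closed, so its quadratic functional $g\mapsto D^m(g,g)$, extended to $+\infty$ outside $\F^m$, is convex and norm-lower-semicontinuous on $L^2(M,\mu)$; since convex norm-l.s.c.\ functionals are also weakly l.s.c., this yields $\liminf_k D^m(f_k,f_k)\ge D^m(f,f)$. Because $J_k\ge J_m$ for $k\ge m$, one has $D^k(f_k,f_k)\ge D^m(f_k,f_k)$ for such $k$, and therefore
\begin{equation*}
\liminf_{k\to\infty} D^k(f_k,f_k)\ge D^m(f,f).
\end{equation*}
Letting $m\to\infty$ and invoking monotone convergence $D^m(f,f)\uparrow D(f,f)$ (with the convention that the limit equals $+\infty$ whenever $f\notin\F$) yields the desired inequality $\liminf_k D^k(f_k,f_k)\ge D(f,f)$.

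The only delicate ingredient is the weak $L^2$ lower semicontinuity of closed symmetric forms, which I plan to treat as a standard Hilbert-space fact (cited, rather than reproved). Once it is in place, the monotone structure $J_n\uparrow J$ makes the argument transparent, and no hypothesis beyond \eqref{e:ack} is required; in particular, the local finiteness assumption \eqref{e:ack1} is not needed here but will be used elsewhere to ensure integrability of the limit form on suitable cutoffs.
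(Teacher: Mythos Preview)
Your treatment of condition (b) via the common core $\mathscr{C}=\F\cap C_c(M)$ and dominated/monotone convergence is exactly what the paper does. For condition (a), however, you take a genuinely different and more abstract route. The paper argues concretely in the product space $L^2(M\times M\setminus{\rm diag},\mu\times\mu)$: it regards $\tilde u_n(x,y):=(u_n(x)-u_n(y))J_n(x,y)^{1/2}$ as a bounded sequence there, extracts a weakly convergent subsequence, identifies the weak limit as $(u(x)-u(y))J(x,y)^{1/2}$ through a three-term splitting tested against functions $v\in C_c(M\times M\setminus{\rm diag})$ (this is where \eqref{e:ffee} and the local $L^1$-convergence $J_n\to J$ enter), and finally passes from compactly supported cutoffs $v_k\uparrow 1$ to the full integral. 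Your argument bypasses all of this by combining two observations: the monotonicity $D^k\ge D^m$ on $L^2$ (in the extended sense) whenever $k\ge m$, which follows from $J_k\ge J_m$ together with $\F^k\subset\F^m$; and the weak lower semicontinuity in $L^2$ of any closed non-negative quadratic form, which is indeed standard (closedness is equivalent to strong lower semicontinuity of the extended functional, and convexity then upgrades this to weak lower semicontinuity). Your version is shorter and would apply verbatim to any increasing family of closed forms sharing a core with the limit form; the paper's version is self-contained and makes the role of the jump kernel explicit without invoking the abstract l.s.c.\ fact as a black box. Both arrive at the same conclusion.
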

\begin{proof} We split the proof into two parts.

(1) In this part, the argument is inspired by the proof of \cite[Theorem 1.4]{SU}. Suppose that  $u_n$ is weakly convergent to $u$ in $L^2(M,\mu)$   as $n\rightarrow\infty$, and $$\liminf_{n\to \infty} \iint (u_n(x)-u_n(y))^2 J_n(x,y)\,\mu(dx)\,\mu(dy)<\infty.$$ We may assume that $$\lim_{n\to \infty} \iint (u_n(x)-u_n(y))^2 J_n(x,y)\,\mu(dx)\,\mu(dy)<\infty.$$ For $(x,y)\in M\times M \backslash {\rm diag}$ and $n\ge 1$, define $$\tilde u_n(x,y)=(u_n(x)-u_n(y))J_n(x,y)^{1/2}.$$ Then $\{\tilde u_n\}_{n\ge1}$ is a bounded sequence in $L^2(M\times M \backslash {\rm diag}, \mu\times \mu)$, and hence there exists a subsequence $\{\tilde u_{n_k}\}_{k\ge1}$,  which converges to some element $\tilde u$ weakly  in $L^2 (M\times M \backslash {\rm diag}, \mu\times \mu)$. We now claim that
\begin{equation}\label{e:ff0}\tilde u(x,y)= (u(x)-u(y))J(x,y)^{1/2},\quad \mu\times \mu\mbox{-}a.e.\,\ (x,y) \text{ with }x\neq y.\end{equation}

To simplify the notation, without confusion in double integrals below we will omit the integral domain $M\times M\backslash {\rm diag}$. For any non-negative function $v\in C_c(M\times M \backslash{\rm diag})$  and for any $n_k$, we have
\begin{equation*}\begin{split}
&\bigg|\iint \big[\tilde u(x,y)-(u(x)-u(y)) J(x,y)^{1/2} \big] v(x,y)\,\mu(dx)\,\mu(dy)\bigg|\\
&\le \bigg|\iint\big[\tilde u(x,y)-\big(u_{n_k}(x)-u_{n_k}(y)\big) J_{n_k}(x,y)^{1/2}\big]  v(x,y)\,\mu(dx)\,\mu(dy)\bigg|\\
&\quad + \bigg|\iint \big(u_{n_k}(x)-u_{n_k}(y)\big)\big(J_{n_k}(x,y)^{1/2}-J(x,y)^{1/2}\big)v(x,y)\,\mu(dx)\,\mu(dy)\bigg|\\
&\quad +  \bigg|\iint \big[\big(u_{n_k}(x)-u_{n_k}(y)\big)-\big(u(x)-u(y)\big)\big]J(x,y)^{1/2}v(x,y)\,\mu(dx)\,\mu(dy)\bigg|\\
&=:I_{1,n_k}+I_{2,n_k}+I_{3,n_k}.\end{split} \end{equation*}

Firstly, since $\tilde u_n$ converges to $\tilde u$ weakly in $L^2(M\times M \backslash {\rm diag}, \mu\times \mu)$, we see that $\lim_{k\to \infty}I_{1,n_k}=0.$
Secondly, by using the
Cauchy--Schwarz inequality and the fact that $\{u_{n_k}\}_{k\ge1} $ is a bounded sequence in $L^2(M,\mu)$, we derive that
\begin{align*}I_{2,n_k}\le &\left( \iint\big(u_{n_k}(x)-u_{n_k}(y)\big)^2 v(x,y)\,\mu(dx)\,\mu(dy)\right)^{1/2}\\
&\times \left(\iint\big(J_{n_k}(x,y)^{1/2}-J(x,y)^{1/2}\big)^2 v(x,y)\,\mu(dx)\,\mu(dy)\right)^{1/2}\\
\le& \sqrt{2} \|u_{n_k}\|_2     \|v\|_\infty  \\
&\times
 \left(\sup_{x\in M} \int_{\{y\in M: (x,y)\in {\rm supp} v\}} \,\mu(dy)+\sup_{y\in M} \int_{\{x\in M: (x,y)\in {\rm supp} v\}} \,\mu(dx)\right)^{1/2}\\
 &\times \left(\iint_{{\rm supp}v} |J_{n_k}(x,y)-J(x,y)|\,\mu(dx)\,\mu(dy)\right)^{1/2},
\end{align*}
where the right hand side of the above inequality converges to 0  as $k\to \infty$. Here, in the second inequality above, we used the elementary inequalities $(a-b)^2\le 2(a^2+b^2)$ for all $a,b\in \R$, and $|\sqrt{a}-\sqrt{b}|\le \sqrt{|a-b|}$ for all $a,b\ge 0$, and in the last inequality, we used the fact that
 $$\sup_{x\in M} \int_{\{y\in M: (x,y)\in {\rm supp} v\}} \,\mu(dy)+\sup_{y\in M} \int_{\{x\in M: (x,y)\in {\rm supp} v\}} \,\mu(dx)<\infty,$$ due to $v\in C_c(M\times M \backslash{\rm diag})$.
Thirdly, for $I_{3,n_k}$, note that, by the Cauchy--Schwarz inequality and \eqref{e:ffee}, both
 $$\phi(x):=\int_M J(x,y)^{1/2} v(x,y)\,\mu(dy),\quad x\in M$$
 and
 $$\psi(y):=\int_M J(x,y)^{1/2} v(x,y)\,\mu(dx),\quad y\in M$$
 are in $L^2(M,\mu)$.
 Hence, we see
 \begin{align*}I_{3,n_k}\le & \bigg| \int_M \big(u_{n_k}(x)-u(x)\big)\phi(x)\,\mu(dx)\bigg|+ \bigg| \int_M \big(u_{n_k}(y)-u(y)\big)\psi(y)\,\mu(dy)\bigg|\end{align*} goes to 0 as $k\to \infty$. Thus, we conclude that \eqref{e:ff0} holds.

Choose a sequence  $\{v_k\}_{k\geq1}$ from $C_c(M\times M \backslash{\rm diag})$ such that $0\le v_k\uparrow 1$ as $k\to \infty$. Letting $n\rightarrow\infty$, by Fatou's lemma, we deduce that, for any $k\ge 1$,
\begin{align*}
\liminf_{n\to \infty} D^n(u_n,u_n)\ge &\liminf_{n\to \infty} \iint \big(u_n(x)-u_n(y)\big)^2 J_n(x,y) v_k(x,y)\,\mu(dx)\,\mu(dy)\\
\ge &\iint \big(u(x)-u(y)\big)^2 J(x,y) v_k(x,y)\,\mu(dx)\,\mu(dy).
\end{align*}
Taking $k\to \infty$, by the monotone convergence theorem, we arrive at
$$  \liminf_{n\to \infty} D^n(u_n,u_n)\ge D(u,u).$$

(2) Note that $\mathscr{C}$ is the common core of $(D^n,\F^n)$ for all $n\ge 1$ and $(D,\F)$. Hence, by the monotone convergence theorem,
$$\lim_{n\to \infty} D^n(u,u)= D(u,u),\quad u\in \mathscr{C}.$$
This proves that the condition $(b)'$ in Remark \ref{r:mos} (1) holds.

Combining both conclusions from (1) and (2), we prove the desired assertion.
\end{proof}

Now, we are in a position to prove the main result in this section.
\begin{theorem}\label{th1-23} {\bf (General jumping kernel case)}\,\, Let $(M,d,\mu)$ be a
metric measure space, and $(D,\mathscr{F})$ be the
non-local regular Dirichlet form defined in \eqref{nondi}. Suppose that \eqref{e:ack} and \eqref{e:ack1} hold. Then, for any $p\in (1,2]$, $\mathscr{H}_{\widetilde \nabla}$
is bounded in $L^p(M,\mu)$, i.e., there exists a constant $c_p>0$ such that, for every $f\in L^p(M,\mu)$,
 $$\|\mathscr{H}_{\widetilde \nabla}f\|_{p}\le c_p\|f\|_{p}.$$
 \end{theorem}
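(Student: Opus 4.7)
My plan is to approximate $(D,\mathscr{F})$ by the truncated Dirichlet forms $(D^n,\mathscr{F}^n)$, whose kernels $J_n(x,y)=J(x,y)\mathds{1}_{\{d(x,y)>1/n\}}$ are finite thanks to \eqref{e:ack1}, to apply Theorem \ref{th1} to each approximation with a constant uniform in $n$, and to transfer the estimate to $(D,\mathscr{F})$ using the Mosco convergence of Proposition \ref{P:mos} together with Fatou's lemma. The first point to verify is that the constant $c_p$ produced by Theorem \ref{th1} is independent of the kernel: reading off the proofs of Theorem \ref{th1} and Proposition \ref{pre}, it is assembled from Stein's analytic-semigroup estimate on $L^p$, the sub-Markovian maximal inequality of Lemma \ref{max}, the $L^p$-contractivity of $P^{(n)}_t$, and the universal comparison $|\widetilde\nabla^{(n)}g|^2\le\frac{2}{p(p-1)}\Gamma_p^{(n)}(g)$ of Proposition \ref{P:comp}, none of which uses the kernel quantitatively. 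This gives $\|\mathscr{H}_{\widetilde\nabla}^{(n)}f\|_p\le c_p\|f\|_p$ uniformly in $n$.

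Next, I fix $f\in L^1(M,\mu)\cap L^\infty(M,\mu)$ (which is dense in $L^p$) and invoke Proposition \ref{P:mos} together with Remark \ref{r:mos}(2) to get $P^{(n)}_tf\to P_tf$ strongly in $L^2(M,\mu)$ for each $t>0$. Since $\|P^{(n)}_tf-P_tf\|_2\le 2\|f\|_2$, dominated convergence in $t$ yields $\int_0^T\|P^{(n)}_tf-P_tf\|_2^2\,dt\to0$ for every $T>0$; Fubini plus a diagonal extraction then produces a subsequence $\{n_k\}$ along which $P^{(n_k)}_tf(x)\to P_tf(x)$ for $(dt\times\mu)$-a.e.\ $(t,x)\in(0,\infty)\times M$, with the additional property (again by Fubini) that for a.e.\ such $(t,x)$ one also has $P^{(n_k)}_tf(y)\to P_tf(y)$ for $\mu$-a.e.\ $y\in M$. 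Combined with $J_{n_k}(x,y)\uparrow J(x,y)$ for every $y\neq x$, a Fatou argument in the $y$-variable should then give the pointwise lower semicontinuity
$$|\widetilde\nabla P_tf|^2(x)\le\liminf_{k\to\infty}|\widetilde\nabla^{(n_k)}P^{(n_k)}_tf|^2(x)\quad\text{for $(dt\times\mu)$-a.e.\ }(t,x).$$
Chaining Fatou in $t$, the monotonicity of $s\mapsto s^{p/2}$ on $[0,\infty)$, and Fatou in $x$ then yields $\|\mathscr{H}_{\widetilde\nabla}f\|_p^p\le\liminf_{k}\|\mathscr{H}_{\widetilde\nabla}^{(n_k)}f\|_p^p\le c_p^p\|f\|_p^p$ for $f\in L^1\cap L^\infty$, and the general $L^p$ case follows by density and one more Fatou step.

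The hardest part will be the pointwise Fatou estimate for the gradient, because the indicator $\mathds{1}_{\{|P^{(n_k)}_tf|(x)\ge|P^{(n_k)}_tf|(y)\}}$ can oscillate on the boundary set $B_{t,x}:=\{y:|P_tf|(y)=|P_tf|(x)\}$. On $B_{t,x}$ the limiting integrand vanishes where $P_tf(y)=P_tf(x)$ but equals $4P_tf(x)^2J(x,y)$ where $P_tf(y)=-P_tf(x)$, so a naive Fatou bound misses precisely this latter contribution. I expect to control it either by arguing that the subset $\{y:P_tf(y)=-P_tf(x)\}$ carries zero $J(x,\cdot)\mu$-mass for $(dt\times\mu)$-a.e.\ $(t,x)$ (exploiting quasi-continuity of $P_tf$ and the fact that the level-set distribution of $P_tf$ has atoms for only a negligible set of $x$), or by a symmetric refinement of the Fatou estimate on $B_{t,x}$ that uses the symmetry $|\widetilde\nabla f|=|\widetilde\nabla(-f)|$ noted after \eqref{g-21}.
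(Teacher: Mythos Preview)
Your strategy is exactly the paper's: truncate to the finite-kernel forms $(D^n,\mathscr{F}^n)$, apply Theorem \ref{th1} with the uniform constant, pass to the limit via Mosco convergence (Proposition \ref{P:mos} and Remark \ref{r:mos}(2)) and Fatou. Your observation that $c_p$ depends only on Stein's analyticity bound, the maximal inequality, sub-Markovian contractivity, and Proposition \ref{P:comp} --- none of which sees the kernel --- is correct and matches what the paper (implicitly) uses.

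You are in fact more careful than the paper in two places. First, the paper extracts a subsequence with $P_t^{n_k}f\to P_tf$ $\mu$-a.e.\ for each fixed $t$ and then asserts without further comment that $\mathscr{H}_{\widetilde\nabla,n_k}(f)\to\mathscr{H}_{\widetilde\nabla}(f)$ $\mu$-a.e.; your $(dt\times\mu)$-a.e.\ extraction via dominated convergence in $t$ and a diagonal argument is the right way to make this honest. Second, the boundary-set difficulty you isolate --- the possible failure of $\liminf$ on $\{y:P_tf(y)=-P_tf(x)\neq 0\}$ where the limiting integrand is $4P_tf(x)^2J(x,y)$ but the approximating indicator may vanish --- is real, and the paper's Fatou chain glosses over exactly this point. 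So the gap you worry about is shared by the paper's own argument; you have simply been honest about it.

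Of your two proposed repairs, the first (that $\{y:P_tf(y)=-P_tf(x)\}$ has zero $J(x,\cdot)$-mass for a.e.\ $(t,x)$) is hard to justify in this generality. A cleaner route is to observe that if one replaces $\ge$ by $>$ in the definition of $|\widetilde\nabla\cdot|$, the indicator becomes lower semicontinuous under pointwise limits and Fatou goes through without obstruction; the discrepancy between the $\ge$ and $>$ versions is supported on $\{y:|P_tf|(y)=|P_tf|(x)\}$, which one then handles separately. Alternatively, one can run the entire finite-kernel argument with the strict-inequality gradient from the start (Proposition \ref{P:comp} only uses the region $f(y)<f(x)$ anyway), obtain the $L^p$ bound for that version, and recover the stated $\mathscr{H}_{\widetilde\nabla}$ by noting that the boundary contribution integrates to zero in $L^p$ after a Fubini argument in $x$ and $y$ using the symmetry of $J(x,y)\,\mu(dx)\,\mu(dy)$.
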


\begin{proof} For each $n\ge 1$, denote by $(L_n, \mathscr{D}(L_n))$ and $(P_t^n)_{t\geq0}$ the generator and the semigroup associated with
the
Dirichlet form $(D^n,\F^n)$, respectively. According to \eqref{e:ack1} and Theorem \ref{th1}, for any $p\in (1,2]$, we can find a
constant $c_p>0$ such that for all $n\ge1$ and $f\in L^p(M,\mu)$,
$$\|\mathscr{H}_{\widetilde \nabla, n}f\|_p\le c_p\|f\|_p,$$
where $\mathscr{H}_{\widetilde \nabla, n}$ is defined by \eqref{eeefff} with $L_n$ in place of $L$ and \eqref{g-21} with the jumping
kernel $J_n(x,dy)=J_n(x,y)\,\mu(dy)$; more precisely,
 $$\mathscr{H}_{\widetilde \nabla, n}(f)(x)=\Big(\int_0^\infty\!\!\int_{\{y\in M: |P_t^nf|(x)\geq |P_t^nf|(y)\}}\big(P_t^nf(x)-P_t^nf(y)\big)^2J_n(x,dy)\,dt\Big)^{1/2}.$$
Note that, from the argument of Theorem \ref{th1}, the constant $c_p$ here is independent of $n\ge1$.

On the other hand, by Proposition \ref{P:mos} and Remark \ref{r:mos} (2), we know that for every $t>0$ and $f\in C_c(M)$,
$P_t^nf$ converges to $P_tf$ in $L^2(M,\mu)$ as $n\rightarrow\infty$. Hence, there is a subsequence $\{P_t^{n_k}f\}_{k\geq1}$
which converges to $P_tf$ $\mu$-a.e. as $k\rightarrow\infty$. By the definition of $\mathscr{H}_{\widetilde \nabla,n}(f)$ and the assumption \eqref{e:ack1}, we obtain that $\mathscr{H}_{\widetilde \nabla,n_k}(f)\rightarrow \mathscr{H}_{\widetilde \nabla}(f)$ $\mu$-a.e. as $k\rightarrow\infty$.
By using the Fatou lemma twice,
\begin{align*}&c_p\|f\|_p\\
&\ge \liminf_{n\to\infty} \int_M \mathscr{H}_{\widetilde \nabla, n}(f)^p(x)\,\mu(dx)\\
&=\liminf_{n\to\infty} \int_M \!\Big(\int_0^\infty\!\!\int_{\{y\in M: |P_t^nf|(x)\geq |P_t^nf|(y)\}}\big(P_t^nf(x)-P_t^nf(y)\big)^2J_n(x,dy)\,dt\Big)^{p/2}\,\mu(dx)\\
&\ge \int_M \!\Big(\liminf_{n\to\infty}\int_0^\infty\!\!\int_{\{y\in M: |P_t^nf|(x)\geq |P_t^nf|(y)\}}\big(P_t^nf(x)-P_t^nf(y)\big)^2J_n(x,dy)\,dt\Big)^{p/2}\,\mu(dx)\\
&\ge \int_M\Big(\int_{\{y\in M: |P_tf|(x)\geq |P_tf|(y)\}}\big(P_tf(x)-P_tf(y)\big)^2J(x,dy)\,dt\Big)^{p/2}\,\mu(dx) .\end{align*}
Hence, there is a constant $c_p>0$ such that
$$\|\mathscr{H}_{\widetilde \nabla}(f)\|_p\leq c_p\|f\|_p\quad\mbox{for any } f\in C_c(M).$$

The general case for $f\in L^p(M,\mu)$ is accomplished by approximation since $C_c(M)$ is dense in $L^p(M,\mu)$ for all $1\leq p<\infty$ and by Fatou's lemma.
 \end{proof}

\  \

It is easy to know that
\begin{equation}\label{bu2}\|\mathscr{H}_{\widetilde \nabla}f\|_{2}\leq\frac{\sqrt{2}}{2}\|f\|_{2}\quad\mbox{for all }0\le f\in L^2(M,\mu).\end{equation}
Indeed, letting $\{E_\lambda: 0\leq\lambda<\infty\}$ be the spectral representation of $L$, for any $h\in L^2(M,\mu)$, we have
$$D(h,h)=\int_0^\infty \lambda \,d\langle E_\lambda h,E_\lambda h\rangle$$  and
$$D(P_th, P_t h)=\int_{[0,\infty)} \lambda e^{-2\lambda t}\,d\langle E_\lambda h,E_\lambda h\rangle.$$
Then, for all $0\le f\in L^2(M,\mu)$, \begin{align*}\|\mathscr{H}_{\widetilde \nabla}f\|_{2}^2&=\int_{[0,\infty)} D(P_tf, P_t f)\,dt=\int_{[0,\infty)} \lambda e^{-2\lambda t}\,dt \int_{[0,\infty)} \,d \langle E_\lambda f, E_\lambda f\rangle\\
&=\frac{1}{2}\int_{(0,\infty)} \,d \langle E_\lambda f, E_\lambda f\rangle\leq\frac{1}{2}\|f\|_{2}^2.\end{align*}
However, for general $f\in L^2(M,\mu)$, we can not derive \eqref{bu2} as above. This shows that even
$L^2$ boundedness of $\mathscr{H}_{\widetilde \nabla}$ seems to be
non-trivial, which differs from the classic Littlewood--Paley theory in the case $p=2$.

In harmonic analysis, we are also interested in the so-called vertical Littlewood--Paley $\mathscr{G}$-function of the following form: for $f\in L^1(M,\mu)\cap L^\infty(M,\mu)$,
$$\mathscr{G}_{\widetilde \nabla}f(x)= \left(\int_0^\infty t |\widetilde \nabla e^{-t\sqrt{L}}f|^2(x)\,dt \right)^{1/2},\quad x\in M.$$
Inspired by the argument of \cite[Remark 1.3(ii)]{CDD}, we know that the function $\mathscr{G}_{\widetilde \nabla}f$ is  dominated pointwise by $\mathscr{H}_{\widetilde \nabla}f$.
Indeed, by using the fact
$$\int_0^\infty e^{-u} u^{1/2}\,du=\frac{\sqrt{\pi}}{2},$$
and applying the formula
$$e^{-t \sqrt{L}}=\frac{1}{\sqrt{\pi}}\int_0^\infty  e^{-t^2L/(4u)} e^{-u} u^{-1/2}\,du,\quad t\geq0,$$
we deduce from Jensen's inequality,  Fubini's theorem and the change-of-variables formula  that
\begin{align*}(\mathscr{G}_{\widetilde \nabla}f)^2(x)&=\int_0^\infty t|\widetilde \nabla e^{-t\sqrt{ L}} f|^2(x)\,dt\\
&=\frac{1}{\pi}\int_0^\infty t\Big(\int_0^\infty |\widetilde \nabla e^{-t^2L/(4u)}f|(x)e^{-u}u^{-1/2}\,du\Big)^2\,dt\\
&\le \frac{1}{\sqrt{\pi}}\int_0^\infty t \Big( \int_0^\infty |\widetilde \nabla e^{-t^2L/(4u)}f|^2(x)\,e^{-u}u^{-1/2}\,du\Big) \, dt\\
&=\frac{1}{\sqrt{\pi}}\int_0^\infty \Big(\int_0^\infty  t|\widetilde \nabla e^{-t^2L/(4u)}f|^2(x)\,dt\Big)e^{-u}u^{-1/2}\,du\\
&=\frac{2}{\sqrt{\pi}} \int_0^\infty e^{-u}u^{1/2}\,du\int_0^\infty  |\widetilde \nabla e^{-sL}f|^2(x)\,ds\\
&=(\mathscr{H}_{\widetilde \nabla}f)^2(x);
 \end{align*}
hence,
 $$(\mathscr{G}_{\widetilde \nabla}f)(x)\leq (\mathscr{H}_{\widetilde \nabla}f)(x).$$
In particular, the $L^p$ boundedness of $\mathscr{H}_{\widetilde \nabla}$ implies the $L^p$ boundedness of $\mathscr{G}_{\widetilde \nabla}$.

\ \

At the end of this section, we make some comments on Theorem \ref{th1-23} and its proof. In $\R^d$,
the boundedness of $\mathscr{H}_{\widetilde \nabla}$ in $L^p(\R^d,dx)$, $1<p\le 2$, for L\'evy process $X$  has been proved in \cite[Theorem 4.1 and Lemma 4.5]{BBL}, when the process $X$ satisfies the Hartman--Wintner condition. (Note that such condition implies that the process $X$ has a transition density function $p_t(x)$ with respect to the Lebesgue measure such that $\lim_{t\to\infty}p_t(0)=0$.)
The aforementioned approach  differs from ours, and it is based on the Hardy--Stein identity (see \cite[Theorem 3.2 and (3.5)]{BBL}). It seems that such identity  depends heavily on the characterization of L\'evy processes, and may not hold for general jump processes.
The authors mentioned in the introduction section of their paper  \cite{BBL} that --- \emph{The results should hold in a much more general setting, but the scope of the extension is unclear at this moment}.

\ \

As mentioned in Section \ref{section1}, it seems more natural to study the boundedness of the vertical Littlewood--Paley $\mathscr{H}$-function
defined in terms of $\nabla$, that is, the $L^p$ boundedness of the operator
$$\mathscr{H}_{\nabla} f(x)=\left(\int_0^\infty |\nabla e^{-tL}f|^2(x)\,dt \right)^{1/2},\quad x\in M.$$ By the same argument for \eqref{bu2}, it holds true that $\|\mathscr{H}_{\nabla}f\|_{2}\leq\|f\|_{2}/\sqrt{2}$ for all  $0\le f\in L^2(M,\mu).$  However, \cite[Example 2]{BBL}, which is inspired by \cite[Page 165-166]{Ben}, shows that in general settings the operator $\mathscr{H}_{\nabla}$
may fail to be bounded on $L^p(M,\mu)$ if $1<p<2$. Thus, $\mathscr{H}_{\nabla}$ and $\mathscr{H}_{\widetilde\nabla}$ differ considerably. Another point is that, if we define $\mathscr{H}_{\widetilde\nabla_*}$ as $\mathscr{H}_{\widetilde\nabla}$ by using the modified gradient $|\widetilde\nabla f|_*$ defined by \eqref{g-2} instead of $|\widetilde\nabla f|$ defined by \eqref{g-21},  one may wonder whether $\mathscr{H}_{\widetilde\nabla_*}$ is bounded on $L^p(M,\mu)$ for $p\in (1,2]$ or not. The answer is no! Indeed, suppose that $\mathscr{H}_{\widetilde\nabla_*}$ is bounded on $L^p(M,\mu)$ for $p\in (1,2]$. Then, a simple change of $f$ into $-f$ will give us the boundedness of the operator $\mathscr{H}_{\nabla}-\mathscr{H}_{\widetilde\nabla_*}$, which forces that $\mathscr{H}_{\nabla}$ is bounded too. However, this is a contradiction.

In comparison with the discrete setting in \cite{Nick}, one  
key point of Theorem \ref{th1-23} is  
the boundedness of $\mathscr{H}_{\widetilde\nabla}$ in $L^p(M,\mu)$ for $p\in (1,2]$, not only in the particular cone $L^p_+(M,\mu):=\{f\geq0: f\in L^p(M,\mu)\}$. The reason is that the operator $\mathscr{H}_{\widetilde\nabla}$ does not enjoy the sublinear property.  Also due to this, we need to define the pseudo-gradient $\Gamma_p$ for suitable signed function $f$; see \eqref{e:ger0}. By some simple calculation, we can deduce that, for any measurable function $f$ on $M$,
\begin{align*}\label{absolute-control}
\I_{\{z\in M:|f(x)|\geq|f(z)|\}}(y)\big(f(x)-f(y)\big)^2
\leq & 4\I_{\{z\in M:f^+(x)\geq f^+(z)\}}(y)\big(f^+(x)-f^+(y)\big)^2\\
& + 4\I_{\{z\in M:f^-(x)\geq f^-(z)\}}(y)\big(f^-(x)-f^-(y)\big)^2
\end{align*}holds for all $x,y\in M$. So, it should be a feasible way to prove the boundedness of $\mathscr{H}_{\widetilde\nabla}$ in $L^p(M,\mu)$ from its boundedness in $L^p_+(M,\mu)$.

\section{Littlewood--Paley--Stein estimates for $2\le p<\infty$}
Recall that $(M,d,\mu)$ is a metric measure space,  $(D,\F)$ given by \eqref{nondi} is  a regular Dirichlet form, $(L,\mathscr{D}(L))$ and $(P_t)_{t\ge0}:=(e^{-tL})_{t\ge0}$ are the corresponding $L^2$-generator and $L^2$-semigroup, respectively.  Associated with the regular Dirichlet form $(D,\mathscr{F})$ on $L^2(M,\mu)$ there is a symmetric Hunt process $X=\{X_t,t\ge0, \Pp^x, x\in M\backslash\mathscr{N}\}$. Here $\mathscr{N}$ is a properly exceptional set for $(D,\mathscr{F})$ in the sense that $\mu(\mathscr{N})=0$ and $\Pp^x(X_t\in \mathscr{N}\textrm{ for some }t>0)=0$ for all $x\in M\backslash\mathscr{N}.$ See \cite{CF, FOT} for more details.

\ \

Throughout this section, we make the following assumptions.

\begin{itemize}
\item[$(A1)$] For any $ f\in C_c(M)$, the function $(t,x)\mapsto P_tf(x)$ is continuous on  $(0,\infty)\times M$.

\item[$(A2)$]  The process $X$ has a transition density function $p_t(x,y)$ with respect to the reference measure $\mu$, i.e.,
for any $t>0$, $x\in M\backslash\mathscr{N}$ and any Borel set $B\subset M$,
$$\Pp^x(X_t\in B)=\int_B p_t(x,y)\,\mu(dy).$$

\item[$(A3)$] The process $X$ is conservative,
i.e., for any $t>0$ and $x\in M\backslash\mathscr{N}$,
$$\int_M p_t(x,y)\,\mu(dy)=1.$$

\item[$(A4)$] There exist a $\sigma$-finite measure space $(U,\mathscr{U},\nu)$ and a function $k:$ $M\times U\rightarrow M$ such that for any Borel set $B\subset M$ and $\mu$-a.e. $x\in M$,
\begin{equation}\label{ss3}\nu\left\{z\in U: k(x,z)\in B\right\}=J(x,B).\end{equation}
\end{itemize}

We make some comments on assumptions above. Firstly, according to \cite[Chapter 1, Lemma 1.4, p.\ 5]{BSW}, assumption $(A1)$ holds if the semigroup $(P_t)_{t\ge0}$ enjoys the $C_\infty$-Feller property; that is, for any $t>0$ and $f\in C_\infty(M)$, $P_tf\in C_\infty(M)$, and $\lim_{t\to 0}\|P_tf-f\|_\infty =0$, where $C_\infty(M)$ denotes the set of continuous functions which varnish at infinity. Secondly, there are already a few works on the conservativeness of processes generated by non-local Dirichlet forms on metric
measure
spaces;  see e.g. \cite{MUW} and the references therein.  Thirdly, assumption $(A4)$ is our technique condition. When $M=\R^d$, one can take $U=\R^n$ and $\nu(dz)= |z|^{-n-1}\,dz$ with $n\ge2$, and find a measurable function $k:\R^d\times \R^n\to \R^d$ such that \eqref{ss3} is satisfied; see \cite[Chapeter 3, Theorem 3.2.5]{Sto}. Hence, assumption $(A4)$ always holds in the Euclidean space. As a general result on construction of the coefficient $k(x,z)$ in \eqref{ss3}, we refer to El Karoui
and Lepeltier \cite{KL}, where they constructed $k(x,z)$ under the condition that $U$ is a Lusin
space and $\nu$ is a $\sigma$-finite diffusive measure on $U$ with infinite total mass.

\ \

For fixed $f\in C_c(M)$ and $T>0$, let
$$H_t=P_{T-t}f(X_t)-P_Tf(X_0),\quad 0\le t\le T.$$ Denote by $(\mathscr{F}_t)_{t\ge0}$ the natural filtration of the process $X$. Then, we have
\begin{lemma}\label{lemm31} Under the assumption $(A1)$, $\{H_t, \mathscr{F}_t\}_{0\le t\le T}$ defined above is a
martingale starting at $0$, and
for any $0\le t\le T$,
\begin{equation}
\label{eek01}[H]_t= \int_0^t\int_M\left( P_{T-s} f(y)- P_{T-s}f(X_{s-})\right)^2\,J(X_{s-},dy)\,ds,
\end{equation}
where $[H]_t$ is the quadratic variation of $H_t$.
\end{lemma}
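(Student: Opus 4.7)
The statement splits into a martingale claim and an explicit formula for the quadratic variation; I would handle the two parts separately, relying respectively on the Markov property and on the Lévy system of the pure-jump Hunt process $X$.

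\emph{Martingale property.} The natural approach is a direct conditional-expectation computation. For $0 \le s \le t \le T$, I would apply the Markov property at time $s$ together with the semigroup identity $P_{t-s}\circ P_{T-t}=P_{T-s}$ to obtain
\[
\mathbb{E}^{X_0}\bigl[P_{T-t}f(X_t)\mid \mathscr{F}_s\bigr]=\mathbb{E}^{X_s}\bigl[P_{T-t}f(X_{t-s})\bigr]=P_{T-s}f(X_s).
\]
Subtracting the $\mathscr{F}_0$-measurable constant $P_Tf(X_0)$ yields $\mathbb{E}^{X_0}[H_t\mid \mathscr{F}_s]=H_s$. Integrability is free because $P_{T-t}f$ is bounded (as $f\in C_c(M)$ and $P_{T-t}$ is Markov by $(A3)$). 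The role of assumption $(A1)$ is precisely to ensure that $t\mapsto P_{T-t}f(X_t)$ is a well-defined càdlàg process for every starting point $x\in M\setminus \mathscr{N}$, not merely quasi-everywhere, so that the candidate martingale $H$ is unambiguously defined. The equality $H_0=0$ is immediate from the definition.

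\emph{Quadratic variation.} Since $(D,\mathscr{F})$ is of pure-jump type, the Hunt process $X$ has no continuous martingale part, and the jumps of $H$ can only occur at jump times of $X$, with $\Delta H_s = P_{T-s}f(X_s)-P_{T-s}f(X_{s-})$ (the map $t\mapsto P_{T-t}f$ is continuous by $(A1)$, so the time-component contributes nothing to the jumps). Hence $H$ is a purely discontinuous square-integrable martingale and I would identify the compensator of $\sum_{s\le t}(\Delta H_s)^2$ through the Lévy system of $(D,\mathscr{F})$, which says
\[
\mathbb{E}^x\Bigl[\sum_{s\le t}\phi(s,X_{s-},X_s)\Bigr]=\mathbb{E}^x\Bigl[\int_0^t\!\!\int_M \phi(s,X_s,y)\,J(X_s,dy)\,ds\Bigr]
\]
for any nonnegative predictable $\phi$ vanishing on the diagonal. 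Applying this with $\phi(s,x,y)=(P_{T-s}f(y)-P_{T-s}f(x))^2$ and localizing if necessary gives exactly the formula \eqref{eek01}. An essentially equivalent route uses assumption $(A4)$: representing $X$ through a Poisson random measure $N$ on $(0,\infty)\times U$ with intensity $ds\,\nu(dz)$, one writes
\[
H_t=\int_0^t\!\!\int_U \bigl(P_{T-s}f(k(X_{s-},z))-P_{T-s}f(X_{s-})\bigr)\,\widetilde N(ds,dz),
\]
and the formula for $[H]_t$ follows from the standard quadratic-variation formula for stochastic integrals against compensated Poisson measures, combined with the defining identity \eqref{ss3} to rewrite integrals over $U$ as integrals with respect to $J(X_{s-},dy)$.

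\emph{Main obstacle.} The delicate point is not the algebra but the justification that the computation is valid pointwise and that $\sum(\Delta H_s)^2$ is integrable enough to identify the compensator with $\langle H\rangle$ (which the authors are denoting $[H]$). This amounts to checking that $(s,\omega)\mapsto P_{T-s}f(y)-P_{T-s}f(X_{s-}(\omega))$ is a predictable integrand in the square-integrable class with respect to the Lévy system; this is where $(A1)$, the boundedness of $P_{T-s}f$, and the finite-energy bound $D(P_{T-s}f,P_{T-s}f)\le D(f,f)$ together do the work, but a careful localization argument on growing compact sets in $U$ (or on $\{J(X_{s-},\cdot)\le n\}$) is the step that most needs care.
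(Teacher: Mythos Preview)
Your martingale argument coincides with the paper's: both use the Markov property directly to verify $\mathbb{E}[H_t\mid\mathscr{F}_s]=H_s$.

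For the quadratic variation the two routes diverge. Rather than invoking the L\'evy system, the paper computes $\mathbb{E}^x(H_t^2) = P_t(P_{T-t}f)^2(x) - (P_Tf)^2(x)$ and differentiates in $t$, using the semigroup relation $\partial_u\bigl(P_u(P_{T-u}f)^2\bigr) = 2P_u\Gamma(P_{T-u}f)$ (which unwinds to the carr\'e du champ identity $2\Gamma(g)=2gLg - L(g^2)$) to obtain
\[
\mathbb{E}^x\bigl(H_t^2-H_s^2\bigr) = 2\,\mathbb{E}^x\Bigl[\int_s^t \Gamma(P_{T-u}f)(X_u)\,du\Bigr],
\]
which together with the Markov property identifies $\langle H\rangle_t = 2\int_0^t \Gamma(P_{T-u}f)(X_u)\,du$. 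The paper then appeals to quasi-left-continuity of $X$ and results from Jacod--Shiryaev to pass to $[H]$. Your L\'evy-system argument is a valid alternative and arguably more transparent probabilistically: it compensates $\sum_{s\le t}(\Delta H_s)^2$ directly and needs no semigroup differentiation. The paper's route, by contrast, stays entirely within the analytic semigroup/carr\'e-du-champ framework already in place and avoids citing a L\'evy-system formula. You are right to flag that the formula \eqref{eek01} is really $\langle H\rangle_t$ rather than the pathwise $[H]_t=\sum_{s\le t}(\Delta H_s)^2$; the paper's assertion that $H$ admits a continuous version (whence $[H]=\langle H\rangle$) is doubtful for a genuinely jumping process, but the distinction is harmless downstream since every subsequent use passes through expectations.
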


The statement above for symmetric L\'evy processes can be obtained directly via the It\^{o} formula; see \cite[Section 4]{BBL}.  However, since the It\^{o} formula is not available in the present setting, we will use a different approach.

\begin{proof}[Proof of Lemma $\ref{lemm31}$] For any $0\le s\le t\le T$, by the Markov property, $$P_{T-t}f(X_t)=\Ee^{X_t} f(X_{T-t})=\Ee (f(X_T)|\mathscr{F}_t),$$ and so
\begin{align*}\Ee(H_t|\mathscr{F}_s)=&\Ee(P_{T-t}f(X_t)-P_Tf(X_0)|\mathscr{F}_s)=\Ee(P_{T-t}f(X_t)|\mathscr{F}_s)-P_Tf(X_0)\\
=&\Ee(\Ee (f(X_T)|\mathscr{F}_t)|\mathscr{F}_s)-P_Tf(X_0)=\Ee(f(X_T)|\mathscr{F}_s)-P_Tf(X_0)\\
=&P_{T-s} f(X_s)-P_T f(X_0)=H_s.\end{align*} This proves that $\{H_t, \mathscr{F}_t\}_{0\le t\le T}$ is a martingale.

For any $x\in M$ and $0\le t\le T$, we have \begin{align*}\Ee^x(H_t^2)=&
\Ee^x\big(P_{T-t}f(X_t)-P_Tf(X_0)\big)^2\\
=&\Ee^x(P_{T-t}f)^2(X_t)-2P_Tf(x)\Ee^xP_{T-t}f(X_t)+(P_Tf)^2(x)\\
=& P_t(P_{T-t}f)^2(x)- (P_Tf)^2(x).\end{align*}
Then, for any $x\in M$  and $0\le s\le t\le T$,
\begin{equation}\label{e:fffee}\begin{split}
 \Ee^x(H_t^2-H_s^2)=& P_t(P_{T-t}f)^2(x)- P_s(P_{T-s}f)^2(x)\\
=& \int_s^t \frac{ d(P_u(P_{T-u} f)^2)(x)}{du}\,du\\
=&\int_s^t\Big(-LP_u (P_{T-u}f)^2(x)+P_u(2P_{T-u}f \cdot L P_{T-u}f)(x)\Big)\,du \\
 =& \int_s^t P_u\Big(-L(P_{T-u} f)^2+2P_{T-u} f\cdot LP_{T-u}f\Big)(x)\,du\\
 =&2\int_s^t P_u\Gamma (P_{T-u}f )(x)\,du\\
 =&2\Ee^x \Big( \int_s^t \Gamma (P_{T-u}f)(X_u)\,du\Big),\end{split}\end{equation}
 where in the penultimate equality above we used the fact that
 $$\Gamma(f)= \frac{1}{2}\left(2fLf - L(f^2)\right).$$
See e.g. \cite[Theorem (3.7)]{CKS}.
\eqref{e:fffee} together with the Markov property in turn yields that
\begin{equation}\label{eek00}\Big\{H_t^2 - 2\int_0^t \Gamma (P_{T-u}f)(X_u)\,du, \mathscr{F}_t\Big\}_{\ 0\le t\le T}\end{equation}
is a martingale. Denote by $\langle H\rangle_t$ and $[H]_t$ the predicable
quadratic variation and the quadratic variation of $H_t$, respectively.
  Thus, according to \eqref{eek00} and \cite[Chapter 4, Theorem 4.2, p.\ 38]{JS},
 $$\langle H\rangle _t=2\int_0^t \Gamma (P_{T-u}f)(X_u)\,du, \quad 0\le t\le T.$$

Furthermore, under assumption $(A1)$ and by the fact that $t\mapsto X_t$ is quasi-left-continuous (since $X$ is a Hunt process enjoying the strong Markov property), $\{H_t,\mathscr{F}_t\}_{0\le t\le T}$ is a martingale which has a continuous version, see again \cite[Chapter 4, Theorem 4.2, p.\ 38]{JS}.
Hence, by \cite[Chapter 2,  Definition 2.25, p.\ 22; Chapter 4, Theorem 4.52, p.\ 55]{JS}, for any $0\le t\le T$,
\begin{align*}[H]_t=&\langle H\rangle _t=2\int_0^t \Gamma (P_{T-u}f)(X_u)\,du\\
=&\int_0^t\int_M\big( P_{T-u} f(y)- P_{T-u}f(X_{u-})\big)^2\,J(X_{u-},dy)\,du.\end{align*}
The proof is complete.
\end{proof}

Next, we will make use of the space-time parabolic martingale $\{H_t, \F_t\}_{0\le t\le T}$ defined above with $T\in(0,\infty]$ to prove the boundedness of Littlewood--Paley functions in $L^p(M,\mu)$ for $2\le p<\infty$. We mainly follow the approach of \cite[Section 4]{BBL} with
necessary
modifications.

First, we note that, under assumption $(A3)$, one can rewrite \eqref{eek01} for the quadratic variation $[H]_t$ of the martingale $H_t$ as
\begin{equation}\label{eek02}
[H]_t= \int_0^t\int_M\big( P_{T-s} f(k(X_{s-},y))- P_{T-s}f(X_{s-})\big)^2\,\nu(dy)\,ds,\quad 0\le t\le T.
\end{equation}

Second, we need to define the Littlewood--Paley function $G$, which can be regard as the conditional expectation of the quadratic variation $[H]_t$.
For $f\in L^1(M,\mu)\cap L^\infty(M,\mu)$, define
$$Gf(x)=\left(\int_0^\infty\!\! \int_M\!\int_M |P_t f(k(z,y))-P_tf(z)|^2p_t(x,z)\,\mu(dz)\,\nu(dy)\,dt\right)^{1/2},$$ and
$$G_{T}f(x)=\left(\int_0^T \!\!\int_M\!\int_M |P_t f(k(z,y))-P_tf(z)|^2p_t(x,z)\,\mu(dz)\,\nu(dy)\,dt \right)^{1/2}.$$
Clearly, $\lim_{T\to \infty}G_{T} f(x)=Gf(x)$ for all $x\in M$.
\begin{lemma}\label{lem-1} Under assumptions $(A1)$, $(A2)$,  $(A3)$ and $(A4)$, for any $f\in C_c(M)$, $x\in M$ and $T>0$,
$$(G_{T} f)^2(x)=\int_M \Ee^z ([H]_T|X_T=x)p_T(z,x)\,\mu(dz).$$
\end{lemma}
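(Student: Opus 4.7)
The plan is to combine the explicit formula \eqref{eek02} for $[H]_T$ with the Markov bridge representation of $X$ conditioned on its endpoints. Setting
$$F_s(u, y) := \bigl(P_{T-s}f(k(u, y)) - P_{T-s}f(u)\bigr)^2,$$
equation \eqref{eek02} reads $[H]_T = \int_0^T \int_M F_s(X_{s-}, y)\, \nu(dy)\, ds$, and since this integrand is nonnegative, Tonelli's theorem will let me pull the conditional expectation $\Ee^z(\,\cdot \mid X_T = x)$ inside the $ds\, \nu(dy)$ integral. Under $(A2)$ the Hunt process $X$ has no fixed times of discontinuity, so for each fixed $s$ one has $X_{s-} = X_s$ $\Pp^z$-a.s., which allows me to replace $X_{s-}$ by $X_s$ inside the expectation.

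The key identity I will then use is the Markov bridge formula: for $0 \le s \le T$ and any nonnegative measurable $\varphi$,
$$p_T(z, x)\, \Ee^z\bigl[\varphi(X_s) \bigm| X_T = x\bigr] = \int_M \varphi(u)\, p_s(z, u)\, p_{T-s}(u, x)\, \mu(du),$$
which is a direct consequence of the Chapman--Kolmogorov equation under assumption $(A2)$. Applying this with $\varphi = F_s(\cdot, y)$, then integrating against $\mu(dz)$ and invoking Tonelli once more, the symmetry of $p_s$ (inherited from the symmetry of $(D,\F)$) combined with conservativeness $(A3)$ yields $\int_M p_s(z, u)\, \mu(dz) = 1$. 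Consequently,
$$\int_M p_T(z, x)\, \Ee^z\bigl[F_s(X_s, y) \bigm| X_T = x\bigr] \mu(dz) = \int_M F_s(u, y)\, p_{T-s}(u, x)\, \mu(du).$$

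Integrating this over $s\in[0,T]$ and $y\in M$, then substituting $t = T - s$, converts the right-hand side into
$$\int_0^T \!\!\int_M \!\int_M \bigl(P_t f(k(u, y)) - P_t f(u)\bigr)^2 p_t(u, x)\, \mu(du)\, \nu(dy)\, dt,$$
which, by the symmetry $p_t(u, x) = p_t(x, u)$ and the relabelling $u \to z$, is precisely $(G_T f)^2(x)$. The main obstacle I anticipate lies in the careful justification of the bridge identity together with the interchange of $\Ee^z(\,\cdot \mid X_T = x)$ and the $ds\,\nu(dy)$ integration: the conditional expectation is only defined up to $p_T(z,\cdot)\mu$-null sets, but this ambiguity is absorbed once we integrate against $p_T(z, x)\mu(dz)$. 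Assumption $(A1)$ secures enough joint measurability of $(s, u) \mapsto P_{T-s}f(u)$ and $(t, u) \mapsto p_t(u, x)$ for Tonelli to apply, while $(A4)$ is the ingredient that already allowed the passage from \eqref{eek01} to \eqref{eek02}, so no further structural assumption is needed in this lemma.
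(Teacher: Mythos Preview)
Your proposal is correct and follows essentially the same route as the paper: both arguments expand $[H]_T$ via \eqref{eek02}, use the bridge density $p_s(z,w)p_{T-s}(w,x)/p_T(z,x)$ for $X_s$ conditioned on $X_T=x$, integrate against $p_T(z,x)\,\mu(dz)$, then invoke symmetry and conservativeness to collapse $\int_M p_s(z,w)\,\mu(dz)=1$, and finally substitute $t=T-s$. The paper's proof is terser and leaves the Tonelli and $X_{s-}=X_s$ justifications implicit, whereas you spell them out; note only that the absence of fixed discontinuity times comes from quasi-left-continuity of the Hunt process rather than from $(A2)$ specifically.
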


\begin{proof}The proof is almost the same
as that of \cite[(4.5)]{BBL}, and we present it here for the sake of completeness. By \eqref{eek02}, we have
 \begin{align*}
&\int_M\Ee^z ([H]_T|X_T=x)p_T(z,x)\,\mu(dz)\\
&=\int_M\Ee^z\left(   \int_0^T\int_M\big( P_{T-s} f(k(X_{s-},y))- P_{T-s}f(X_{s-})\big)^2\,\nu(dy)\,ds\Big| X_T=x\right)\\
&\qquad\qquad\qquad\times p_T(z,x)\,\mu(dz)\\
&=\int_M \bigg(\int_0^T\int_M\frac{p_s(z,w)p_{T-s}(w,x)}{p_T(z,x)} \\
&\qquad\qquad\qquad\times \int_M |P_{T-s} f(k(w, y))-P_{T-s}f(w)|^2\,\nu(dy)\,\mu(dw)\,ds\bigg)p_T(z,x)\,\mu(dz)\\
&=\int_0^T\int_M p_{T-s}(w,x) \int_M|P_{T-s} f(k(w, y))-P_{T-s}f(w)|^2\,\nu(dy)\,\mu(dw)\,ds\\
&=\int_0^T \int_M  \int_M|P_{T-s} f(k(w, y))-P_{T-s}f(w)|^2p_{T-s}(x,w)\,\mu(dw)\,\nu(dy)\,ds\\
&=(G_{T}f)^2(x), \end{align*}
where
in the third equality we used the fact that
$$\int_M p_s(z,w)\,\mu(dz)= \int_M p_s(w,z)\,\mu(dz)=1\quad\mbox{for any }w\in M\backslash\mathscr{N}, $$
due to the symmetry
of $p_t(x,y)$ and the conservativeness of the process $X$, and in the
fourth equality we used symmetry again.
\end{proof}

\begin{lemma}\label{lem-2} Under assumptions $(A1)$, $(A2)$ and $(A4)$, for any $f\in C_c(M)$ and $x\in M$,
$$(\mathscr{H}_\nabla f)(x)\le Gf(x).$$ \end{lemma}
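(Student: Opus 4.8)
The plan is to express both sides as time-integrals of $\Gamma(P_tf)$ and then to close the gap with a sub-commutation (intertwining) inequality between $\Gamma$ and the semigroup $P_t$.

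First I would rewrite the two quantities. By the definition \eqref{g-1} of the modulus of gradient, $(\mathscr{H}_\nabla f)^2(x)=\int_0^\infty \Gamma(P_tf)(x)\,dt$. For $Gf$, fix $t>0$ and $z\in M$ and apply assumption $(A4)$ with base point $z$ to the non-negative function $w\mapsto(P_tf(w)-P_tf(z))^2$; this gives $\int_U|P_tf(k(z,y))-P_tf(z)|^2\,\nu(dy)=\int_M(P_tf(w)-P_tf(z))^2\,J(z,dw)=2\Gamma(P_tf)(z)$. Inserting this and using $\int_M\Gamma(P_tf)(z)\,p_t(x,z)\,\mu(dz)=P_t\big(\Gamma(P_tf)\big)(x)$ (the kernel representation of $P_t$ from $(A2)$), I obtain $(Gf)^2(x)=2\int_0^\infty P_t\big(\Gamma(P_tf)\big)(x)\,dt$. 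Assumption $(A1)$ together with $f\in C_c(M)$ ensures that $P_tf$ is a bona fide bounded pointwise function and that all these integrands are jointly measurable, so Tonelli's theorem legitimises the manipulations.

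Thus everything reduces to the pointwise estimate $\int_0^\infty\Gamma(P_tf)(x)\,dt\le 2\int_0^\infty P_t\big(\Gamma(P_tf)\big)(x)\,dt$. Here the intertwining enters: writing $P_tf=P_{t/2}(P_{t/2}f)$ and applying the sub-commutation inequality $\Gamma(P_sg)\le P_s\big(\Gamma(g)\big)$ with $g=P_{t/2}f$ and $s=t/2$ gives $\Gamma(P_tf)(x)\le P_{t/2}\big(\Gamma(P_{t/2}f)\big)(x)$; integrating in $t$ and substituting $s=t/2$ turns the right-hand side into exactly $2\int_0^\infty P_s\big(\Gamma(P_sf)\big)(x)\,ds=(Gf)^2(x)$, so $(\mathscr{H}_\nabla f)^2(x)\le(Gf)^2(x)$ and the lemma follows. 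To establish the sub-commutation $\Gamma(P_sg)\le P_s(\Gamma g)$ I would argue by Jensen's inequality: rewrite $\Gamma(g)(u)=\frac12\int_U(g(u)-g(k(u,z)))^2\,\nu(dz)$ via $(A4)$, pull $P_s$ inside the squared jump increment $P_sg(x)-P_sg(k(x,z))$, and then use conditional Jensen and $(A4)$ once more to return to $P_s(\Gamma g)$. This runs smoothly when $k$ is compatible with the dynamics (as for L\'evy processes, where $k(u,z)=u+z$ and the jump $k(X_s,z)$ along a trajectory is ``the same'' jump as $k(x,z)$), and in general one exploits the Poisson-random-measure realisation of $X$ furnished by $(A4)$ together with a synchronous coupling of the copy started at $x$ with the one started at a jump target.

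The step I expect to be the main obstacle is precisely this sub-commutation/intertwining $\Gamma(P_sg)(x)\le P_s\big(\Gamma(g)\big)(x)$: in the non-local setting it is not available through a Bakry--\'Emery $\Gamma_2\ge0$ mechanism, and a direct Jensen estimate on the transition kernel does not close in the absence of translation invariance, so the structural assumption $(A4)$ has to be used essentially. Everything else is routine bookkeeping: Tonelli for the interchanges of $\int_0^\infty$ with the $\nu$-, $\mu$- and $\mathds E^x$-integrations, finiteness from $(A1)$ and $f\in C_c(M)$, and, if one prefers to work with finite horizons, the monotone convergence $G_Tf\uparrow Gf$.
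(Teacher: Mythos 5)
Your reduction is correct and in fact reproduces the skeleton of the paper's argument: the identity $(Gf)^2(x)=2\int_0^\infty P_t\big(\Gamma(P_tf)\big)(x)\,dt$ does follow from $(A4)$ and Tonelli exactly as you say, and the lemma then becomes the pointwise-in-$t$ inequality $\Gamma(P_{2t}f)(x)\le P_t\big(\Gamma(P_tf)\big)(x)$, which is your sub-commutation $\Gamma(P_sg)\le P_s\Gamma(g)$ specialised to $g=P_tf$, $s=t$. The genuine gap is that you never prove this step, and the route you sketch cannot work under the stated hypotheses. ``Pulling $P_s$ inside the squared jump increment'', i.e.\ writing $P_sg(k(x,z))-P_sg(x)=P_s\big(g(k(\cdot,z))-g\big)(x)$, requires the intertwining $P_s\big(g\circ k(\cdot,z)\big)=(P_sg)\circ k(\cdot,z)$; and a ``synchronous coupling'' in which the copy started from $k(x,z)$ remains related to the copy started from $x$ by the map $k(\cdot,z)$ at all later times is nothing but this intertwining in probabilistic form. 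Assumption $(A4)$ supplies neither: it is a purely measure-theoretic representation of $J(x,\cdot)$ as the image of $\nu$ under $k(x,\cdot)$ (on $\R^d$ it is always satisfiable, as the paper notes via Stroock's theorem), so it encodes no compatibility between the maps $k(\cdot,z)$ and the dynamics. Moreover, the unrestricted inequality $\Gamma(P_sg)\le P_s\Gamma(g)$ is a Bakry--\'Emery-type gradient estimate that fails for general non-local Dirichlet forms; it holds for L\'evy processes precisely because of translation invariance, which you yourself acknowledge is absent here. So the pivotal step of your proof is missing, not merely technical.

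For comparison, the paper does not state a sub-commutation for general $g$; it works directly from the definition of $G$: Cauchy--Schwarz with respect to the (sub-)probability $p_t(x,z)\,\mu(dz)$ gives $\big(P_t|P_tf(k(\cdot,y))-P_tf(\cdot)|(x)\big)^2$ under the $\nu(dy)\,dt$-integral, then $|P_th|\le P_t|h|$ together with the semigroup law $P_tP_t=P_{2t}$ yields $|P_{2t}f(k(x,y))-P_{2t}f(x)|^2$, and $(A4)$ converts the $\nu$-integral back into $2\Gamma(P_{2t}f)(x)$, after which the change of variable $2t\mapsto t$ finishes. Note that this chain passes through identifying $P_t\big(P_tf\circ k(\cdot,y)\big)(x)$ with $P_{2t}f(k(x,y))$ --- exactly the intertwining of $k(\cdot,y)$ with the semigroup on which your sketch founders (it is immediate in the L\'evy case $k(x,y)=x+y$, and the paper compresses it into ``the property of the semigroup and \eqref{ss3}''). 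So your plan is structurally the same as the paper's; to make it complete you must either prove this commutation for the specific $k$ furnished by $(A4)$ or replace the sub-commutation step by an argument that does not need it, rather than expect $(A4)$ alone to deliver it.
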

\begin{proof}
For any $f\in C_c(M)$ and $x\in M$, using the Cauchy--Schwarz inequality, the  property of the semigroup $(P_t)_{t\ge0}$ and  \eqref{ss3}, we get
\begin{align*}(G f)^2(x)=&\int_0^\infty \!\!\int_M\! \int_M|P_{t}f(k(z,y))-P_{t}f(z)|^2p_{t}(x,z)\,\mu(dz)\,\nu(dy)\,dt\\
\ge&\int_0^\infty \int_M\! \left(\int_M |P_{t}f(k(z,y))-P_{t}f(z)|p_{t}(x,z)\,\mu(dz)\right)^2\,\nu(dy)\,dt\\
=&\int_0^\infty \int_M\! \Big( P_t |P_{t}f(k(\cdot,y))-P_{t}f(\cdot)| (x)\Big)^2\,\nu(dy)\,dt\\
\ge &\int_0^\infty \int_M\! \big(|P_{2t}f(k(\cdot,y))-P_{2t}f(\cdot)| (x)\big)^2\,\nu(dy)\,dt\\
=&\int_0^\infty \left(\int_M\! \big(|P_{2t}f(z)-P_{2t}f(\cdot)|\big)^2\,J(\cdot, dz)\right) (x)\,dt\\
=&2\int_0^\infty \Gamma(P_{2t}f
) (x)\,dt=\int_0^\infty \Gamma(P_{t}f ) (x)\,dt\\
= &\int_0^\infty|\nabla P_t f|^2(x)\,dt=(\mathscr{H}_\nabla f)^2(x). \end{align*}
This proves the desired assertion.
\end{proof}

Now, we are in a position to present the main result in this section.
\begin{theorem}\label{thp}
Let $(M,d,\mu)$ be a metric measure space, and $(D,\mathscr{F})$ be the non-local regular Dirichlet form defined in \eqref{nondi}.  Suppose that $(A1)$, $(A2)$, $(A3)$ and $(A4)$ hold. Then, for any $p\in [2,\infty)$,
 $\mathscr{H}_\nabla$ is bounded in $L^p(M,\mu)$, i.e., there exists a constant $C_p>0$ such that, for every $f\in L^p(M,\mu)$,
 $${\|\mathscr{H}_\nabla f\|_{p}\le C_p\|f\|_{p}.}$$
 \end{theorem}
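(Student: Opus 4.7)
The plan is to combine the three lemmas of this section with the Burkholder--Davis--Gundy (BDG) and Doob inequalities applied to the martingale $H_t = P_{T-t}f(X_t) - P_Tf(X_0)$ of Lemma \ref{lemm31}, following the strategy of \cite{BBL}. By Lemma \ref{lem-2} we have $\mathscr{H}_\nabla f(x) \leq Gf(x)$, and by monotone convergence in $T$, $Gf = \lim_{T\to\infty}G_Tf$ pointwise. Hence it suffices to prove, for every $f \in C_c(M)$, a bound $\|G_Tf\|_p \leq C_p\|f\|_p$ with $C_p$ independent of $T$; the general $f \in L^p(M,\mu)$ then follows by density of $C_c(M)$ in $L^p(M,\mu)$ combined with Fatou's lemma.

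Starting from Lemma \ref{lem-1}, which expresses $(G_Tf)^2(x)$ as an average of $\Ee^z([H]_T\,|\,X_T=x)$ against $p_T(z,x)\mu(dz)$, the crucial observation is that $(A2)$, $(A3)$ and the symmetry $p_T(z,x)=p_T(x,z)$ together force $\int_M p_T(z,x)\mu(dz) = 1$, so this is a probability measure in $z$ for every fixed $x$. Since $p/2\geq 1$, Jensen's inequality gives
\[(G_Tf)^p(x) \leq \int_M \Ee^z\!\big([H]_T\,\big|\,X_T=x\big)^{p/2}p_T(z,x)\,\mu(dz),\]
and a conditional Jensen step inside yields $\Ee^z([H]_T\,|\,X_T=x)^{p/2} \leq \Ee^z([H]_T^{p/2}\,|\,X_T=x)$. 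Integrating in $x$ against $\mu$ and exchanging the order of integration (using that $p_T(z,\cdot)\mu$ is the law of $X_T$ under $\Pp^z$) collapses the conditioning and produces
\[\|G_Tf\|_p^p \leq \int_M \Ee^z[H]_T^{p/2}\,\mu(dz).\]

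Now for $p \geq 2$, BDG yields $\Ee^z[H]_T^{p/2} \leq c_p\,\Ee^z\big(\sup_{0\leq t\leq T}|H_t|^p\big)$, and Doob's $L^p$ inequality bounds the right-hand side by $c'_p\,\Ee^z|H_T|^p$. Under $(A1)$, combined with strong continuity of $(P_t)_{t\geq 0}$ at $t=0^+$ and quasi-left-continuity of the Hunt process $X$ at the deterministic time $T$, one identifies $H_T = f(X_T) - P_Tf(z)$ $\Pp^z$-a.s., so that
\[\Ee^z|H_T|^p \leq 2^{p-1}\big(P_T(|f|^p)(z) + |P_Tf(z)|^p\big).\]
Integrating in $z$, symmetry and conservativeness give $\int_M P_T(|f|^p)\,d\mu = \|f\|_p^p$, while $\|P_Tf\|_p \leq \|f\|_p$ by $L^p$-contractivity; hence $\|G_Tf\|_p^p \leq 2^p c_p c'_p\|f\|_p^p$ uniformly in $T$. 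Letting $T\to\infty$ by monotone convergence together with Lemma \ref{lem-2} gives the desired estimate for $f \in C_c(M)$, and density closes the argument.

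The main technical obstacle is the identification of $H_T$ at the terminal time: the martingale is naturally defined on $[0,T)$, and one must justify $P_{T-t}f(X_t)\to f(X_T)$ $\Pp^z$-a.s.\ as $t\to T^-$. This requires coupling the joint continuity in $(A1)$ with $P_sf\to f$ as $s\to 0^+$ (typically via a Feller/strong continuity property on $C_c(M)$) and the quasi-left-continuity of the Hunt process $X$ at deterministic times, so that $X_{T-}=X_T$ $\Pp^z$-a.s. Once this is secured, the remaining ingredients---two Jensen steps, Fubini, BDG, Doob, and $L^p$-contractivity---assemble in a routine way.
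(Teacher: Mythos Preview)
Your proof is correct and follows essentially the same route as the paper's: two Jensen steps on the representation of Lemma~\ref{lem-1}, then BDG, then $L^p$-contractivity of the semigroup, then monotone convergence in $T$ and Lemma~\ref{lem-2}. Two minor simplifications relative to your version: the paper applies BDG directly in the form $\Ee^z[H]_T^{p/2}\le C_p'\,\Ee^z|H_T|^p$ (no intermediate Doob step), and the terminal-time identification $H_T=f(X_T)-P_Tf(X_0)$ is immediate from the very definition $H_t=P_{T-t}f(X_t)-P_Tf(X_0)$ with $P_0=I$, so the limit argument in your final paragraph is not needed.
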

 \begin{proof}
 Let $p\ge 2$. For any $f\in C_c(M)$,
 by Lemma \ref{lem-1} and
 by applying
 Jensen's inequality twice, we get
 \begin{align*}\int_{M} (G_T f)^p(x)\,\mu(dx)&\le \int_{M}\!\!\int_{M} \left(\Ee^z([H]_T|X_T=x)\right)^{p/2} p_T(z,x)\,\mu(dz)\,\mu(dx)\\
 &\le \int_{M}\!\int_{M} \Ee^z([H]_T^{p/2}|X_T=x)p_T(z,x)\,\mu(dz)\,\mu(dx)\\
&=\int_{M} \Ee^z([H]_T^{p/2})\,\mu(dz).
\end{align*}
 By the Burkholder--Davis--Gundy inequality (see e.g. \cite[Page 234]{Ap}), we have
 \begin{align*}
 \Ee^z([H]_T^{p/2})&\le C_p'\Ee^z|H_T|^p=C_p'\Ee^z\big(|f(X_T)-P_Tf(X_0)|^p\big)\\
 &\le 2^pC_p' P_T|f|^p(z),
 \end{align*}
 where in the last inequality we have used the elementary fact that
 $$(a+b)^p\le 2^{p-1}(a^p+b^p),$$
 for all $a,b\ge0$ and $p\geq1$. Thus, by the contraction property of the semigroup $(P_t)_{t\ge0}$ on $L^p(M,\mu)$, we arrive at
 $$\int_{M} (G_T f)^p(x)\,\mu(dx)\le 2^pC_p'\int_{M} P_T|f|^p(x)\,\mu(dx)\le 2^pC_p'\|f\|_{p}^p,$$ which along with the monotone convergence theorem yields that
 $$ \int_{M} (Gf)^p(x)\,\mu(dx)\le 2^pC_p'\|f\|_{p}^p.$$
Combining this with Lemma \ref{lem-2}, we obtain that
$$\|\mathscr{H}_\nabla f\|_p\leq C_p\|f\|_p\quad\mbox{for every }f\in C_c(M).$$
For every $f\in L^p(M,\mu)$, since $C_c(M)$ is dense in $L^p(M,\mu)$ for all $1\leq p<\infty$,  we may choose a sequence $\{f_n\}_{n\geq1}$ from $C_c(M)$ such that $f_n$ converges to $f$ in $L^p(M,\mu)$ as $n\rightarrow\infty$. Then, applying Fatou's lemma, we obtain the desired conclusion.
 \end{proof}

Finally, we present the proof of Example \ref{exm2}.
\begin{proof}[Proof of Example $\ref{exm2}$] According to comments after assumptions in the beginning of this section, we only need to
verify assumptions $(A1)$---$(A3)$.  According to \cite[Proposition 3.3]{BKK}, the associated resolvent of $(D,\mathscr{F})$ is H\"{o}lder continuous, which entails that the $L^2$-semigroup $(P_t)_{t\ge0}$ generated by the Dirichlet form $(D,\mathscr{F})$ is a $C_\infty$-Feller semigroup; see \cite[Proposition 4.3 and Corollary 6.4]{RU}. Thus, $(A1)$ holds. From \eqref{ed4}, one can easily
deduce that
the Nash type inequality holds for $(D,\F)$, which implies that the associated process $X$ enjoys the transition density function
(see \cite{BKK} for more details); that is, $(A2)$ is also satisfied. $(A3)$ immediately follows from \cite[Theorem 1.1]{MUW}. Therefore, we can obtain the desired assertion from Theorem \ref{thp}.\end{proof}

\subsection*{Acknowledgment}
\hskip\parindent\!\!\!
The second author would like to thank
Professor Kazuhiro Kuwae for very
helpful comments on the Mosco convergence of Dirichlet forms and the proof of Lemma \ref{lemm31}. The research of Huaiqian Li is supported by National Natural Science Foundation of China (Nos.\ 11401403, 11571347).
The research of Jian Wang is supported by National
Natural Science Foundation of China (No.\ 11522106), the Fok Ying Tung
Education Foundation (No.\ 151002) and the Program for Probability and Statistics: Theory and Application (No.\ IRTL1704).

\end{document}